\tikzset{>={Stealth[scale=.4]}}
\theoremstyle{plain}
\newtheorem{theorem}{Theorem}[section] % reset theorem numbering for each chapter
\theoremstyle{definition}
\newtheorem{definition}[theorem]{Definition} % definition numbers are dependent on theorem numbers
\newtheorem{proposition}[theorem]{Proposition}
\newtheorem{lemma}[theorem]{Lemma}
\newtheorem{maintheorem}{Theorem}
\newcommand{\R}{\mathbb{R}}
\newcommand{\N}{\mathbb{N}}
\begin{document}
\title{Unique equilibrium states for Viana maps with small potentials}

\author{Kecheng Li}
\address{Department of Mathematics, Tufts University, 
Medford, MA 02155}
\email{kecheng.li@tufts.edu}
\urladdr{} % Delete if not wanted.

\begin{abstract}
    We investigate the thermodynamic formalism for Viana maps—skew products obtained by coupling an expanding circle map with a slightly perturbed quadratic family on the fibers. For every Hölder potential \(\varphi\) whose oscillation is below an explicit threshold, we show that an equilibrium state not only exists but is unique and satisfies an upper level-2 large-deviation principle. All of these conclusions persist under sufficiently small perturbations of the reference map.
\end{abstract}

\maketitle

\section{Introduction}
In this paper, we establish the existence and uniqueness of equilibrium states for Viana maps under a natural small-oscillation condition on the potential. The equilibrium state we derive is a Gibbs measure that satisfies a global upper Gibbs property. This enables us to obtain an upper level-2 large deviation principle.

Viana maps form an influential class of non-uniformly expanding surface endomorphisms obtained as skew–products of an expanding circle map with a perturbed quadratic family on the fibers \cite{MR1471866}.  Along the one-dimensional \emph{central} bundle the derivative is, in principle, expanding; however, whenever an orbit slips into the critical line $t=0$ the quadratic term folds the fiber, sharply reducing the local derivative and diluting the accumulated expansion.  This intermittent weakening prevents the system from being Anosov while still permitting a rich thermodynamic theory.

Let \(f :M \rightarrow M\) be a continuous map on a compact smooth manifold. Among the invariant
probability measures for the system, thermodynamic formalism identifies distinguished measures called equilibrium states; these are measures that maximize the quantity
\(h_{\mu}(f)+ \int \varphi \,d\mu\), where \(\varphi:M \rightarrow \R\) is a potential function.

Classically, Bowen showed that for a \emph{mixing uniformly hyperbolic} system, the combination of expansivity and the specification property ensures both existence and uniqueness of equilibrium states for every Hölder potential \cite{Bowen1974SomeSW}.  Climenhaga and Thompson later extracted the main ingredient of Bowen’s argument and recast it in a flexible \emph{orbit-decomposition} framework that works beyond uniform hyperbolicity \cite{MR3552538}.  Their technique measures the topological pressure carried by ``obstructions'' to expansivity and specification; if these obstructions have strictly smaller pressure than the whole system, a unique equilibrium state still exists. In recent years, the Climenhaga–Thompson machinery has been applied extensively to construct equilibrium states in a range of settings:
\begin{enumerate}
\item Mañé diffeomorphisms \cite{CFT} and the Katok map~\cite{WANG21}.
\item geodesic flows on manifolds of non‑positive curvature \cite{BCFT,CT_22} and on surfaces without conjugate points \cite{CKW};
\item Lorenz flows with singularities \cite{PYY};
\item geodesic flows on flat surfaces with singularities \cite{CCESW};
\end{enumerate}

Using this generalization, we show that for Viana maps every Hölder continuous function whose oscillation is below an explicit threshold admits a \emph{unique} equilibrium state. The proof adapts the Climenhaga–Thompson machinery: we carve the dynamics into a ``good'' core that enjoys specification and the Bowen property and a ``bad'' tail whose pressure we show is strictly lower than the topological pressure.

One benefit of \cite{Bowen1974SomeSW} and \cite{MR3552538} is the construction of the unique equilibrium state as a Gibbs measure. In \cite{Bowen1974SomeSW}, the lower Gibbs property is essential in ruling out mutually singular equilibrium states. This approach is generalized in \cite{MR3552538}, in which Climenhaga and Thompson derive the lower Gibbs property of equilibrium state for the ``essential collection of orbit segments'' which dominates in pressure as well as a global upper Gibbs property for all orbit segments. 

Based on this property, we are able to deduce an upper bound for the level-2 large deviation principle for the equilibrium state of Viana maps. In general, the large deviation principle describes the exponential rate of convergence of time average to the space average with respect to a given measure.

Before stating the main theorem, recall Viana’s construction.  
\begin{definition}[Viana map]
    Consider the map  
    \[
    f_{\alpha}(\theta,t)=\bigl(d\theta \mod 1,\;a_0-t^2+\alpha \sin (2\pi\theta)\bigr),
    \qquad
    (\theta,t)\in S^{1}\times \R,
    \]
Here \(d \geq 2\) is an integer, \(\alpha\) is a small real number and the parameter \(a_0 \in (1, 2)\) is such that the map \(h(t) = a_0 - t^2\) has a pre-periodic (but not periodic) critical point. For \(\alpha\) small enough there is an interval \(I \subset (-2,2)\) for which \(f_{\alpha}(S^1\times I) \subset \mathrm{int}(S^1\times I)\).
\end{definition}

In \cite{MR1471866}, Viana constructed this quadratic skew-products over the linear strongly expanding map of the circle and proved that if \(d\geq 16\), then there exists a positive constant $c_0 > 0$ such that any map $f$ sufficiently $C^{3}$-close to $f_{\alpha}$ has both Lyapounov exponents greater than $c_0$ Lebesgue-a.e.. 

Buzzi, Sester and Tsujii later in \cite{MR2018605} proved that if \(d\ge2\), then for every sufficiently small \(\alpha>0\) the same is true in a $C^{\infty}$-neighborhood.

The principal result of this paper is the following:

\begin{maintheorem}\label{main}
    For all \(d\ge16\) and \(\epsilon>0\) sufficiently small, there is an \(\alpha_0>0\) such that for all \(\alpha\le\alpha_0\) and all Hölder potentials  with \(\sup \varphi-\inf \varphi <\frac{c_0}{2}-\log \frac{d+\epsilon}{d-\epsilon}\), the Viana map \(f_{\alpha}\) has a unique equilibrium state \(\mu\) for \(\varphi\). Moreover, \(\mu\) has the upper level-2 large deviations principle.
\end{maintheorem}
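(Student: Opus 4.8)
The plan is to verify the hypotheses of the Climenhaga--Thompson criterion for uniqueness of equilibrium states, and then to upgrade the Gibbs property it produces into the upper level-2 large deviations principle. Concretely, I would work with the natural symbolic/geometric coding of $f_\alpha$ in which the base direction is modeled by the full shift on $d$ symbols (the branches of $\theta \mapsto d\theta$) and the fiber direction contributes the non-uniformity. The first step is to record the structural estimates coming from Viana's and Buzzi--Sester--Tsujii's work: Lebesgue-a.e. point has both Lyapunov exponents exceeding $c_0 > 0$, the map is topologically exact, and — crucially — one has quantitative control of how often and how badly orbits encounter the critical line $t = 0$. I would then fix a small $\delta > 0$ and define a decomposition $(\mathcal{P}, \mathcal{G}, \mathcal{S})$ of orbit segments $(x,n)$: put $(x,n) \in \mathcal{G}$ when the orbit of $x$ up to time $n$ stays at controlled distance from the critical line (so that the central derivative behaves like genuine expansion at rate close to the full value and the local product structure / Bowen property hold), and absorb the remaining ``bad'' behavior into $\mathcal{P}$ and $\mathcal{S}$.

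The second step is to establish the three ingredients the Climenhaga--Thompson theorem requires. \emph{Specification for $\mathcal{G}$}: for good segments the dynamics is essentially uniformly expanding in both coordinates, so one can shadow finitely many good orbit segments using the $d$-to-$1$ branching in the base together with the bounded distortion on the fibers; the circle map being a linear expanding map makes the gluing in the $\theta$-coordinate immediate, and the fiber gluing uses that $h$ is expanding away from $0$. \emph{Bowen property on $\mathcal{G}$}: since $\varphi$ is Hölder and good orbit segments stay in a region of bounded hyperbolicity, the usual summation of Hölder variations along an exponentially-contracted Bowen ball converges, giving $|S_n\varphi(x) - S_n\varphi(y)| \le V$ uniformly. \emph{Pressure gap}: one must show $P(\mathcal{P}\cup\mathcal{S}, \varphi) < P(f_\alpha, \varphi)$. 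Here is where the hypothesis $\sup\varphi - \inf\varphi < \tfrac{c_0}{2} - \log\tfrac{d+\epsilon}{d-\epsilon}$ enters decisively: the obstruction pressure is bounded above by $\sup\varphi$ plus the exponential growth rate of bad orbit segments, and the latter growth rate is controlled by the measure-theoretic statement that the set of points spending an abnormal fraction of time near $t=0$ is exponentially small with rate governed by $c_0$ and $d$ (this is the content of Viana's large-deviation-type estimates, refined by the $d\pm\epsilon$ branch-counting). Meanwhile $P(f_\alpha,\varphi) \ge h_\mu(f_\alpha) + \int\varphi\,d\mu \ge \tfrac{c_0}{2} + \inf\varphi$ for a suitable reference measure (e.g. an absolutely continuous invariant measure whose entropy is bounded below via Ruelle's inequality / Pesin theory using the positive exponents). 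Comparing the two bounds, the gap holds precisely when the oscillation of $\varphi$ is below the stated threshold. Applying the Climenhaga--Thompson theorem then yields a unique equilibrium state $\mu$ carrying a global upper Gibbs bound and a lower Gibbs bound along $\mathcal{G}$.

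For the last step — the upper level-2 large deviations principle — I would invoke the now-standard implication (as in the work of Climenhaga--Thompson and its descendants) that a global upper Gibbs property for the unique equilibrium state, together with upper semicontinuity of the entropy map (which holds here since $f_\alpha$ is $C^\infty$ and hence entropy-expansive, or at least asymptotically $h$-expansive by Buzzi's theorem), implies the upper bound in the level-2 (Donsker--Varadhan) large deviations principle with rate function $\mu' \mapsto -h_{\mu'}(f_\alpha) - \int\varphi\,d\mu' + P(f_\alpha,\varphi)$. The argument is soft: cover a weak-$*$ neighborhood of a measure by Bowen balls, use the upper Gibbs estimate to bound the measure of each ball by $e^{-n(P - h - \int\varphi) + o(n)}$, and sum. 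Finally, the persistence under small $C^3$ (or $C^\infty$) perturbations is automatic: every estimate above — the Lyapunov lower bound $c_0$, the branch count $d\pm\epsilon$, topological exactness, and the bad-orbit growth rate — was already stated to be robust in Viana's and Buzzi--Sester--Tsujii's theorems, so the decomposition and the pressure gap survive verbatim with the same threshold.

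The step I expect to be the main obstacle is the \textbf{pressure gap}, specifically obtaining a clean upper bound for the exponential growth rate of the ``bad'' orbit segments in $\mathcal{P}\cup\mathcal{S}$ that is sharp enough to beat $P(f_\alpha,\varphi)$ under the stated oscillation bound. This requires translating Viana's hyperbolicity-with-large-probability estimates into a genuine topological-pressure statement about orbit segments that linger near the critical line, controlling the interplay between the $d$ base branches and the fiber contraction simultaneously, and making the constants $\epsilon$, $c_0$ track through explicitly. Everything else (specification, Bowen property, the large-deviations upshot, robustness) is comparatively routine given the cited structural results.
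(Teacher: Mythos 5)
Your overall strategy (verify the Climenhaga--Thompson hypotheses, then read off the upper level-2 LDP) matches the paper, but there are two genuine gaps. First, you omit the expansivity obstruction entirely. The theorem you invoke requires not only the three decomposition conditions you list but also \(P_{\mathrm{exp}^+}^{\perp}(\varphi) < P_{\mathrm{top}}(\varphi)\), and for Viana maps this is substantive: the repeated folding of vertical segments across the critical line produces distinct points whose orbits stay \(\epsilon\)-close forever, so the map is \emph{not} expansive at any scale. The paper spends an entire section on this, showing via Alves's hyperbolic times and the slow-recurrence property that every ergodic measure with central exponent above \(c_0/2\) is almost expansive at a uniform scale \(\delta_1\), while measures with central exponent at most \(c_0/2\) have pressure strictly below \(P_{\mathrm{top}}(\varphi)\). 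Without some version of this argument your application of the criterion does not go through.

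Second, your pressure-gap argument — bounding the obstruction pressure by \(\sup\varphi\) plus a growth rate of bad segments controlled by Viana's Lebesgue-measure large-deviation estimates — would fail as stated, and you correctly flag it as the main obstacle without supplying the resolving idea. Lebesgue-a.e. hyperbolicity does not control topological pressure: a set of zero Lebesgue measure can carry full entropy, so ``the bad set is exponentially small in Lebesgue measure'' gives no bound on \(\Lambda_n^{\mathrm{sep}}(\mathcal{S},\varphi,\epsilon)\). The paper avoids this entirely by (i) choosing \(\mathcal{G}\) and \(\mathcal{S}\) in terms of backward Birkhoff sums of \(\psi^c = \log\|Df|_{E^c}\|\) rather than distance to the critical line (which also makes the prefix--core--suffix decomposability automatic by a maximality argument), and (ii) running the ``half of the variational principle'' construction: weighted empirical measures built from \((n,\epsilon)\)-separated subsets of \(\mathcal{S}_n\) converge to invariant measures with \(\lambda^c \le r \le c_0/2\), whose pressure is then beaten by the SRB measure using Ruelle's inequality (\(h_\eta \le \log(d+\epsilon) + c_0/2\)), Pesin's formula (\(h_{\mu_{\mathrm{SRB}}} \ge \log(d-\epsilon) + c_0\)), and the oscillation hypothesis to absorb the difference of the integrals of \(\varphi\). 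This invariant-measure comparison, not a Lebesgue-measure estimate, is where the threshold \(\frac{c_0}{2} - \log\frac{d+\epsilon}{d-\epsilon}\) actually enters. Your specification and Bowen-property arguments, and the derivation of the LDP from the criterion, are essentially in line with the paper.
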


Our proof is designed so that every step survives small perturbations to the dynamics. In particular, the expansivity, specification, and Bowen property we establish for \(f_{\alpha}\) remain valid for all maps in a sufficiently small \(C^{3}\) neighborhood.  This robustness yields the following extension:

\begin{maintheorem}\label{main_B}
    The same result in \cref{main} is true for any sufficiently small \(C^{3}\) perturbation of \(f_{\alpha}\) (which is not necessarily a skew-product).
\end{maintheorem}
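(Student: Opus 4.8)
\textbf{Proof strategy for \cref{main_B}.} The plan is to show that each ingredient used to prove \cref{main} is a $C^3$-open condition with constants uniform on a neighbourhood of $f_\alpha$. Concretely, the argument for \cref{main} will reduce the conclusion to four statements about $f_\alpha$: (i) an expansivity estimate, namely that the non-expansive set carries topological pressure strictly below $P(\varphi)$; (ii) a Climenhaga--Thompson decomposition $(\mathcal P,\mathcal G,\mathcal S)$ of the space of orbit segments whose core $\mathcal G$ has the specification property; (iii) the Bowen property of $\varphi$ on $\mathcal G$; and (iv) the pressure gap $P(\mathcal P\cup\mathcal S,\varphi)<P(\varphi)$, which is where the oscillation hypothesis $\sup\varphi-\inf\varphi<\tfrac{c_0}{2}-\log\tfrac{d+\epsilon}{d-\epsilon}$ is consumed. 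Given (i)--(iv), the Climenhaga--Thompson theorem yields the unique equilibrium state and its global upper Gibbs property, and the upper level-2 large deviations principle follows from the latter; so it suffices to propagate (i)--(iv) to nearby maps.

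First I would record the robust structural input. By Viana's theorem and the Buzzi--Sester--Tsujii extension quoted above, there is a $C^3$-neighbourhood of $f_\alpha$ on which every map still has both Lyapunov exponents greater than $c_0$ Lebesgue-almost everywhere, so $c_0$ in the threshold is not lost. Moreover $f_\alpha$ carries a dominated splitting whose strong sub-bundle expands at rate $d$ and whose centre sub-bundle expands at a rate in $(0,2)$ that degenerates only near the critical line $\{t=0\}$; since domination and its invariant cone fields are $C^1$-open, after shrinking the neighbourhood every $f$ carries a strong-unstable cone field whose expansion lies in the window $[d-\epsilon,d+\epsilon]$ already used for $f_\alpha$, together with a centre cone field that expands weakly only on a compact ``critical region'' $\mathcal C_f$ depending continuously (in the Hausdorff metric) on $f$. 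Finally $f_\alpha(S^1\times I)\subset\mathrm{int}(S^1\times I)$ is open, so every nearby $f$ can be restricted to a common forward-invariant compact region. The key organisational point is that all the geometry behind (i)--(iv) --- hyperbolic-time statistics, bounded distortion along strong-unstable curves, and the combinatorics of returns to the critical region --- should be phrased using only these cone fields, the $C^2$-size of $f$, and $\mathcal C_f$, so that it transfers with uniform constants.

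Granting this, each ingredient migrates. For (i), the non-expansive set of $f$ is governed, via the almost-everywhere positivity of the centre exponent and the statistics of critical returns, by a pressure quantity that varies continuously with $\mathcal C_f$ and the cone data, so the bound $<P(\varphi)$ established for $f_\alpha$ persists after one more shrinking. For (ii), $\mathcal G$ consists of segments spending a definite proportion of their length outside $\mathcal C_f$ with controlled hyperbolic times, and the gluing that yields specification uses only strong-unstable expansion $>1$, topological exactness of $f$ on the invariant region (inherited from the strongly expanding direction through domination), and uniform distortion bounds, so the specification constant is uniform. Ingredient (iii) is essentially free: the Bowen constant on $\mathcal G$ depends only on the Hölder data of $\varphi$ and the uniform distortion bounds. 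For (iv) I would rerun the pressure computation of \cref{main} verbatim, with the same $[d-\epsilon,d+\epsilon]$ expansion window and with $c_0$ unchanged; the slack engineered into $\sup\varphi-\inf\varphi<\tfrac{c_0}{2}-\log\tfrac{d+\epsilon}{d-\epsilon}$ is exactly what keeps the strict inequality.

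The hard part will be (ii). Without the skew-product coordinates there is no honest base map $\theta\mapsto d\theta$ to drive the shadowing or to index the itineraries of the critical region, so one must re-derive the specification gluing and the ``critical approach'' combinatorics using only the invariant strong-unstable cone field and the Hausdorff-continuity of $\mathcal C_f$. Once those statements are stated cone-theoretically rather than in coordinates, their $C^1$-openness carries the whole scheme over; and the proof of \cref{main} should be written in exactly this cone-field language from the outset, so that \cref{main_B} follows with essentially only the bookkeeping of the nested neighbourhoods.
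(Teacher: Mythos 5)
Your proposal is correct and matches the paper's approach: the paper proves Theorem~\ref{main_B} by carrying out the entire argument for an arbitrary map in a small $C^{3}$-neighborhood of $f_{\alpha}$ from the outset, relying exactly on the robust inputs you identify (the $C^{3}$-openness of Viana's non-uniform expansion with constant $c_0$ and of slow recurrence, the $C^{1}$-persistence of the dominated splitting $E^u\oplus E^c$ giving the $[d-\epsilon,d+\epsilon]$ window, the openness of the trapping region, and topological exactness of nearby maps), so that the expansivity gap, the decomposition $(\mathcal P,\mathcal G,\mathcal S)$ with specification, the Bowen property, and the pressure gap all hold uniformly on the neighborhood. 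The only minor difference is that the step you flag as hard (specification without skew-product coordinates) is handled in the paper by quoting topological mixing of all nearby maps from the literature and combining it with the backward contraction built into the definition of $\mathcal G$ via $\psi^c=\log\|Df|_{E^c}\|$, rather than by re-deriving the gluing cone-theoretically.
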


Henceforth we use the term \emph{Viana map} for any map \(f\) that lies in a sufficiently small \(C^{3}\)-neighborhood of the reference skew product \(f_{\alpha}\). Unless explicitly stated otherwise, every result we state for a Viana map therefore applies simultaneously to \(f_{\alpha}\) itself and to all its \(C^{3}\) perturbations within this neighborhood.

We briefly situate our results within the existing literature on Viana maps. Alves proved existence and uniqueness of the SRB measure \cite{MR1743717}.  
Arbieto, Matheus and Senti later established existence of equilibrium states for Hölder potentials with sufficiently small oscillation in an unpublished preprint \cite{arbieto2005maximalentropymeasuresviana}.  
Pinheiro and Varandas extended uniqueness to every Hölder potential whose oscillation is below a threshold \cite{pinheiro2023thermodynamicformalismexpandingmeasures}. Recently, using the countable Markov coding approach, Lima, Obata and Poletti also obtained uniqueness of the measure of maximal entropy \cite{lima2024measuresmaximalentropynonuniformly}

The present work follows a different route: employing the decomposition–obstruction scheme of Climenhaga–Thompson, we show that the unique equilibrium state guaranteed by \cref{ct} is in fact a Gibbs measure. An immediate consequence of this Gibbs characterization yields the large-deviation principle.
\\

\paragraph*{We conclude the introduction with a roadmap of the paper.}
\begin{itemize}
    \item \textbf{\cref{sec:background}} reviews the decomposition–obstruction framework of Climenhaga--Thompson and the necessary concepts from thermodynamic formalism.
    
    \item \textbf{\cref{sec:viana-maps}} recalls Viana’s construction and compiles the dynamical properties used later.
    
    \item \textbf{\cref{sec:hyperbolic-times}} discusses non-uniformly expanding maps and summarises Alves’s notion of \emph{hyperbolic times}.
    
    \item \textbf{\cref{sec:exp}} proves that Viana maps are non-uniformly expansive by showing that the set of non-expansive points carries pressure strictly below the topological pressure.
    
    \item \textbf{\cref{sec:spec}} establishes a non-uniform specification property by demonstrating that most orbit segments satisfy specification.
    
    \item \textbf{\cref{sec:proof-main}} combines these ingredients to prove the main theorem and, as a consequence of the Gibbs property, derives a large-deviation principle.
\end{itemize}

\medskip
\noindent\textbf{Acknowledgments.}  
The author thanks Boris Hasselblatt for invaluable guidance and many helpful discussions.

\section{Background and main technique}\label{sec:background}
We state the preliminary definitions needed for the technique and introduce how to apply the technique to deduce the desired thermodynamic formalism.
\subsection{Pressure}
Let \(f : X \to X\) be a continuous map on a compact metric space. We identify the set \(X \times \mathbb{N}\) with the space of finite orbit segments by identifying \((x, n)\) with \(\bigl(x, f(x), \ldots, f^{n-1}(x)\bigr).\)

The \emph{Bowen metric} of order \(n\) associated to \(f\) is defined by
\[
d_n(x,y) \;=\; \max_{0 \le k < n} d\bigl(f^k(x), f^k(y)\bigr).
\]
Given \(x \in X\), \(\epsilon > 0\), and \(n \in \mathbb{N}\), the \emph{Bowen ball} of order \(n\) with center \(x\) and radius \(\epsilon\) is
\[
B_n(x, \epsilon) 
\;=\; 
\Bigl\{\,y \in X : d_n(x, y) < \epsilon \Bigr\}.
\]
A set \(E \subset X\) is called \((n, \epsilon)\)-separated if
\[
d_n(x, y) \;\ge\; \epsilon 
\quad\text{for all } x, y \in E \text{ with } x \neq y.
\]

Given \(\mathcal{D} \subset X \times \mathbb{N}\), we interpret \(\mathcal{D}\) as a collection of finite orbit segments. Define
\[
\mathcal{D}_n \;=\; \bigl\{\, x \in X : (x, n) \in \mathcal{D} \bigr\},
\]
the set of initial points of all length-\(n\) orbit segments in \(\mathcal{D}\). We then consider the corresponding partition sum
\[
\Lambda_n^{\mathrm{sep}} \bigl(\mathcal{D}, \varphi, \epsilon; f\bigr)
\;=\;
\sup \Bigl\{ \sum_{x \in E} e^{S_n \varphi(x)} 
\;:\;
E \subset \mathcal{D}_n \text{ is } (n,\epsilon)\text{-separated} 
\Bigr\}.
\]

\begin{definition}[Pressure]
    The \emph{pressure of \(\varphi\) on \(\mathcal{D}\) at scale \(\epsilon\)} is given by
    \[
    P\bigl(\mathcal{D}, \varphi, \epsilon; f\bigr)
    \;=\;
    \limsup_{n \to \infty}
    \frac{1}{n}
    \log \Lambda_n^{\mathrm{sep}}\bigl(\mathcal{D}, \varphi, \epsilon; f\bigr).
    \]
    Then the \emph{pressure of \(\varphi\) on \(\mathcal{D}\)} is defined as
    \[
    P(\mathcal{D}, \varphi; f)
    \;=\;
    \lim_{\epsilon \to 0}\,
    P\bigl(\mathcal{D}, \varphi, \epsilon; f\bigr).
    \]
\end{definition}

Given \(Z \subset X\), define
\[
P(Z, \varphi, \epsilon; f) \;:=\; P(Z \times \mathbb{N}, \varphi, \epsilon; f).
\]
Observe that \(P(Z, \varphi; f)\) denotes the usual upper capacity pressure \cite{Bowen1974SomeSW}, and we often write \(P(\varphi; f)\) in place of \(P(X, \varphi; f)\) for the pressure on the entire space.

When \(\varphi = 0\), our definition yields the entropy of \(\mathcal{D}\):
\[
h(\mathcal{D}, \epsilon; f) \;=\; h(\mathcal{D}, \epsilon)
\;:=\;
P(\mathcal{D}, 0, \epsilon)
\quad\text{and}\quad
h(\mathcal{D})
\;=\;
\lim_{\epsilon \to 0} h(\mathcal{D}, \epsilon).
\tag{2.1}
\]

\begin{definition}[Equilibrium State]
    Let \(\mathcal{M}(f)\) be the set of \(f\)-invariant Borel probability measures, and \(\mathcal{M}_e(f)\) the subset of ergodic measures in \(\mathcal{M}(f)\). By the variational principle for pressure \cite[Theorem 9.10]{MR0648108},
    \[
    P(\varphi; f)
    \;=\;
    \sup_{\mu \in \mathcal{M}(f)}
    \Bigl(
    h_\mu(f)
    \;+\;
    \int \varphi \,d\mu
    \Bigr)
    \;=\;
    \sup_{\mu \in \mathcal{M}_e(f)}
    \Bigl(
    h_\mu(f)
    \;+\;
    \int \varphi \,d\mu
    \Bigr).
    \]
    A measure achieving this supremum is called an \emph{equilibrium state}.
\end{definition}

\subsection{Obstructions to expansivity, specification, and regularity}
Bowen showed in \cite{Bowen1974SomeSW} that if \((X, f)\) is expansive and has the specification property, and if \(\varphi\) satisfies certain regularity conditions, then there is a unique equilibrium state. We recall definitions and results from \cite{MR3552538}, which demonstrate that non-uniform analogues of Bowen's hypotheses are sufficient to guarantee uniqueness.

\subsubsection{Expansivity}   
Given a continuous map \(f : X \to X\), the \emph{infinite Bowen ball} of size \(\epsilon > 0\) around a point \(x \in X\) is defined by
\[
\Gamma_\epsilon(x)
\;:=\;
\bigl\{\, y \in X : d(f^k x, f^k y) < \epsilon 
\text{ for all } k \in \mathbb{\N} \bigr\}.
\]
If there exists \(\epsilon > 0\) such that \(\Gamma_\epsilon(x) = \{x\}\) for every \(x \in X\), then we say that \((X, f)\) is \emph{positively expansive}. Throughout the rest of the paper we drop ``positively'' and simply write \emph{expansive}, since all maps under consideration are non-invertible.

\begin{definition}
For \(f : X \to X\), the set of \emph{non-expansive points} at scale \(\epsilon\) is defined by
\[
\mathrm{NE}(\epsilon)
\;=\;
\bigl\{\, x \in X : \Gamma_\epsilon(x) \neq \{x\} \bigr\}.
\]
An \(f\)-invariant measure \(\mu\) is called \emph{almost expansive} at scale \(\epsilon\) if \(\mu\bigl(\mathrm{NE}(\epsilon)\bigr) = 0\).

Given a potential \(\varphi\), the \emph{pressure of obstructions to expansivity} at scale \(\epsilon\) is
\begin{align*}
    P_{\mathrm{exp}^+}^{\perp}(\varphi,\epsilon)
    \;&=\;
    \sup_{\mu \in \mathcal{M}_e(f)}
    \Bigl\{
    h_\mu(f) + \int \varphi \, d\mu
    : \mu\bigl(\mathrm{NE}(\epsilon)\bigr) > 0
    \Bigr\}
\end{align*}

This quantity is \emph{non-increasing} in \(\epsilon\), so we define a scale-free version by
\[
P_{\mathrm{exp}^+}^{\perp}(\varphi)
\;=\;
\lim_{\epsilon \to 0}
P_{\mathrm{exp}^+}^{\perp}(\varphi,\epsilon).
\]
\end{definition}

In particular, the scale-free version
\(
P_{\exp^+}^{\perp}(\varphi) 
\)
satisfies
\[
P_{\exp^+}^{\perp}(\varphi) \le P_{\exp^+}^{\perp}(\varphi,\epsilon)
\]
for any fixed \(\epsilon > 0\).

Roughly speaking, \( P^\perp_{\exp^+}(f,\epsilon) \) can be understood as follows. Recall that positive expansivity is equivalent to the condition
\[
\bigcap_{n \ge 0} B_n(x,\epsilon) = \{x\},
\]
which holds for every \( x \in X \) when \(\epsilon\) is sufficiently small. We define \( P^\perp_{\exp^+}(f,\epsilon) \) to be the supremum of the pressures of all ergodic measures that assign positive weight to the set of points where this condition fails.

\subsubsection{Specification}

\begin{definition}[W–specification at scale $\epsilon$]
A collection of orbit segments 
\[
   \mathcal D \subset X\times\N
   \quad\bigl(\text{with } (x,n)\text{ representing the segment }
   x,\,f(x),\dots,f^{n-1}(x)\bigr)
\]
has the \emph{(W)-specification property at scale $\epsilon>0$} if there exists 
\(\tau=\tau(\epsilon)\in\N\) such that, for every finite set of segments
\[
  \{(x_j,n_j)\}_{j=1}^{k}\subset\mathcal D,
\]
one can find ``gluing times'' 
\(\tau_1,\dots,\tau_{k-1}\in\N\) with \(\tau_i\le \tau\) for every
\(i\) and a point \(y\in X\) satisfying
\[
   d_{n_j}\!\bigl(f^{t_{j}}(y),x_j\bigr)<\epsilon
   \quad\text{for each }1\le j\le k,
\]
where
\[
  t_1:=0,\qquad 
  t_j:=\sum_{i=1}^{j-1}\bigl(n_i+\tau_i\bigr)\quad (2\le j\le k).
\]
Here \(d_{n}(x,y)=\max_{0\le m<n}d\bigl(f^{m}x,f^{m}y\bigr)\)
is the Bowen metric of length \(n\).

We say that \(\mathcal{D}\) has the \emph{(W)-specification property} if the above condition holds for every \(\epsilon > 0\). We say that \((X, f)\) has the (W)-specification property if \(X \times \mathbb{N}\) does. 
\end{definition}

As in \cite{MR3552538} we will sometimes say that \(\mathcal{G}\) has (W)-specification, or simply specification. This version of the specification is weak in the sense that the ``transition times'' \(\tau_i\) are only assumed to be bounded by, rather than equal to, \(\tau\).

\begin{definition}
A triple \(\bigl(\mathcal{P}, \mathcal{G}, \mathcal{S}\bigr) \subset (X \times \mathbb{N})^3\) is called a \emph{decomposition} for \((X, f)\) if for every \(x \in X\) and \(n \in \mathbb{N}\), there exist \(p, g, s \in \mathbb{N}\) with \(p + g + s = n\) such that
\[
(x, p) \;\in\; \mathcal{P},
\quad
\bigl(f^p x, g\bigr) \;\in\; \mathcal{G},
\quad
\bigl(f^{p+g} x, s\bigr) \;\in\; \mathcal{S}.
\]
In other words, every orbit segment can be decomposed into a “prefix” from \(\mathcal{P}\), a “good” core from \(\mathcal{G}\), and a “suffix” from \(\mathcal{S}\).

\end{definition}

Note that the symbol $(x, 0)$ denotes the empty set, and the functions $p, g, s$ are permitted to take the value zero.

\begin{definition}
The \emph{pressure of obstructions to specification} at scale \(\epsilon\) is defined by
\[
P_{\mathrm{spec}}^{\perp}(\varphi, \epsilon) 
\;:=\; 
\inf \Bigl\{ 
  \overline{P}\bigl(\mathcal{P} \cup \mathcal{S},\varphi, 3\epsilon\bigr)
\Bigr\}.
\]
where the infimum is taken over $\mathcal{P}, \mathcal{S} \subset X \times \N$ such that there exists a decomposition $(\mathcal{P},\mathcal{G},\mathcal{S}$) for $(X,f)$ for which every $\mathcal{G}$ has specification at scale $\epsilon$. We then define the scale-free version:
\[
P_{\mathrm{spec}}^{\perp}(\varphi)
\;=\;
\lim_{\epsilon \to 0}
P_{\mathrm{spec}}^{\perp}(\varphi, \epsilon).
\]
\end{definition}
Again, since \(P_{\mathrm{spec}}^{\perp}(\varphi, \epsilon)\) is monotonic as a function of \(\epsilon\), this limit exists. In contrast to \(P_{\mathrm{exp}}^{\perp}(\varphi, \epsilon)\), \(P_{\mathrm{spec}}^{\perp}(\varphi, \epsilon)\) is non-decreasing in \(\epsilon\).

\subsubsection{Regularity for potential}
\begin{definition}[Bowen Property]
Given \(\mathcal{D} \subset X \times \mathbb{N}\), we say a function \(\varphi : X \to \mathbb{R}\) has the \emph{Bowen property} on \(\mathcal{D}\) at scale \(\epsilon\) if there is a constant \(K = K(\varphi, \mathcal{D}, \epsilon)\) such that
\[
\bigl| S_n \varphi(x) - S_n \varphi(y) \bigr| < K
\]
for every \((x, n) \in \mathcal{D}\) and every \(y \in B_n(x, \epsilon)\). A function \(\varphi\) has the \emph{Bowen property} on \(\mathcal{D}\) if it satisfies the Bowen property on \(\mathcal{D}\) at some scale \(\epsilon\) (and thus at all smaller scales as well).
\end{definition}

\subsection{General results on uniqueness of equilibrium states}
Our main tool for existence and uniqueness of equilibrium state is \cite[Theorem 5.5]{MR3552538}. 

\begin{theorem}[{\cite[Theorem 5.5 and Theorem 5.7]{MR3552538}}]\label{ct}
Let \(X\) be a compact metric space and \(f \colon X \to X\) a continuous map. Let 
\(\varphi \colon X \to \mathbb{R}\) be a continuous potential function. Suppose that 
\[
P_{\mathrm{exp}^+}^{\perp}(\varphi) \;<\; P_{\mathrm{top}}(\varphi),
\]
and that \((X, f)\) admits a decomposition \(\bigl(\mathcal{P}, \mathcal{G}, \mathcal{S}\bigr)\) with the following properties:
\begin{enumerate}
    \item \(\mathcal{G}\) has (W)-specification at any scale;
    \item \(\varphi\) has the Bowen property on \(\mathcal{G}\);
    \item \(P_{\mathrm{spec}}^{\perp}(\varphi) \;<\; P_{\mathrm{top}}(\varphi)\).
\end{enumerate}
Then there is a unique equilibrium state \(\mu\) for \(\varphi\). Moreover, \(\mu\) satisfies the following upper large deviations bound:
if \(A\) is any weak*-closed and convex subset of the space of Borel probability
measures on \(X\), then
\[
\limsup_{n\rightarrow\infty} \frac{1}{n} \log\mu\left(\mathcal{E}_n^{-1}(A)\right) \le \sup_{\nu\in A\cap \mathcal{M}(f)}\left(h_{\nu}(f)+ \int \varphi \,d\nu - P_{\mathrm{top}}(\varphi)\right),
\]
where \(\mathcal{E}_n(x) = \frac{1}{n}\sum_{k=0}^{n-1} \delta_{f^kx}\) is the \(n\)-th empirical measure associated to \(x\).
\end{theorem}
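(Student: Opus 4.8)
Since \cref{ct} is quoted verbatim from \cite{MR3552538} (it combines their Theorems~5.5 and~5.7), the task is to recall how that argument runs. The plan is to build a reference measure directly from the partition sums and then show it is simultaneously the unique equilibrium state and a Gibbs measure at one fixed small scale. First I would fix a scale $\epsilon_0$ small enough that all three pressure gaps persist at that scale: $P^{\perp}_{\exp^+}(\varphi,\epsilon_0) < P_{\mathrm{top}}(\varphi)$, and there is a decomposition $(\mathcal P,\mathcal G,\mathcal S)$ with $\mathcal G$ enjoying (W)-specification at scale $\epsilon_0$ and $\overline{P}(\mathcal P\cup\mathcal S,\varphi,3\epsilon_0) < P_{\mathrm{top}}(\varphi)$. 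For each $n$ I pick an $(n,\epsilon_0)$-separated set $E_n\subset X$ nearly realizing $\Lambda_n^{\mathrm{sep}}(X,\varphi,\epsilon_0)$, form the weighted empirical measures $\nu_n = \Lambda_n^{-1}\sum_{x\in E_n}e^{S_n\varphi(x)}\delta_x$ and their averages $\mu_n=\frac1n\sum_{k=0}^{n-1}f^k_*\nu_n$, and pass to a weak$^*$ subsequential limit $\mu$. The computation behind the variational principle gives $h_\mu(f)+\int\varphi\,d\mu \ge P(X,\varphi,\epsilon_0)$, and $P(X,\varphi,\epsilon_0)=P_{\mathrm{top}}(\varphi)$ for $\epsilon_0$ small precisely because the expansivity obstruction is below $P_{\mathrm{top}}(\varphi)$; hence $\mu$ is an equilibrium state and existence is settled.

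The heart of the matter is to upgrade the three hypotheses into Gibbs bounds for $\mu$ at scale $\epsilon_0$. Decomposing each orbit segment through $(\mathcal P,\mathcal G,\mathcal S)$ and discarding the exponentially negligible contribution of long prefixes and suffixes — this is where $\overline{P}(\mathcal P\cup\mathcal S,\varphi,3\epsilon_0) < P_{\mathrm{top}}(\varphi)$ enters — shows the partition sums counted only over $\mathcal G$ still grow like $e^{nP_{\mathrm{top}}(\varphi)}$. The (W)-specification property on $\mathcal G$ then lets me glue many disjoint good segments into a single orbit that $\mu$ is forced to charge, while the Bowen property on $\mathcal G$ keeps the distortion of $S_n\varphi$ under control along the gluings; the outcome is a lower Gibbs bound $\mu(B_n(x,\epsilon_0)) \ge C\,e^{-nP_{\mathrm{top}}(\varphi)+S_n\varphi(x)}$ on a subcollection of $\mathcal G$ that still carries full pressure. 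A dual covering argument — cover $X$ by order-$n$ Bowen balls and again use that the obstructions have strictly smaller pressure — produces the matching global upper bound $\mu(B_n(x,\epsilon_0)) \le C'\,e^{-nP_{\mathrm{top}}(\varphi)+S_n\varphi(x)}$, valid for every $(x,n)$.

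Uniqueness then follows from Bowen's dichotomy for ergodic measures. If $\nu$ is an ergodic equilibrium state, then $P^{\perp}_{\exp^+}(\varphi) < P_{\mathrm{top}}(\varphi)$ forces $\nu(\mathrm{NE}(\epsilon_0))=0$, so the Brin--Katok formula together with Birkhoff's theorem gives $\nu(B_n(x,\epsilon_0)) = e^{-n(h_\nu(f)+o(1))} \approx e^{-nP_{\mathrm{top}}(\varphi)+S_n\varphi(x)}$ on a set of $\nu$-measure arbitrarily close to $1$; comparing this with the global upper Gibbs bound for $\mu$ shows $\nu$ and $\mu$ are not mutually singular, hence $\nu=\mu$, and the non-ergodic case follows from the ergodic decomposition. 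For the large-deviation bound, given a weak$^*$-closed convex set $A$ I would cover $\{x:\mathcal{E}_n(x)\in A\}$ by order-$n$ Bowen balls centered at an $(n,\epsilon_0)$-separated set, use the upper Gibbs bound to get $\mu(\mathcal{E}_n^{-1}(A)) \le C'\sum e^{-nP_{\mathrm{top}}(\varphi)+S_n\varphi(x)}$ over points whose empirical measures lie in a fixed weak$^*$-neighborhood $U$ of $A$, bound that restricted partition sum by $e^{n(P(\{x:\mathcal{E}_n(x)\in U\},\varphi,\epsilon_0)-P_{\mathrm{top}}(\varphi))+o(n)}$, and let $U$ shrink to $A$; since $A$ is weak$^*$-closed and convex, the resulting exponential rate is at most $\sup_{\nu\in A\cap\mathcal{M}(f)}\bigl(h_\nu(f)+\int\varphi\,d\nu-P_{\mathrm{top}}(\varphi)\bigr)$, which is the claimed inequality.

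The main obstacle is the second step — converting the abstract, non-uniform hypotheses into honest Gibbs inequalities at one fixed scale. Specification is available only on $\mathcal G$, not on all of $X\times\N$, so one must first isolate a pressure-full subcollection of good segments that can actually be connected, then transfer the distortion estimate across the gluings using nothing stronger than the Bowen property on $\mathcal G$; the bookkeeping with the nested scales $\epsilon_0$, $2\epsilon_0$, $3\epsilon_0$ hidden in the definitions of $P^\perp_{\mathrm{spec}}$ and $P^\perp_{\exp^+}$ is precisely what makes this step delicate. Once both Gibbs bounds are in hand, uniqueness and the large-deviation estimate are essentially formal.
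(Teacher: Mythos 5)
This statement is not proved in the paper at all: it is imported verbatim from \cite{MR3552538} (their Theorems~5.5 and~5.7) and used as a black box, so there is no internal proof to compare your reconstruction against. Judged as a summary of the cited argument, your outline is broadly faithful: the construction of $\mu$ as a weak$^*$ limit of the averaged weighted empirical measures $\mu_n$, the identification $P(X,\varphi,\epsilon_0)=P_{\mathrm{top}}(\varphi)$ at a fixed small scale via the expansivity gap, the use of specification plus the Bowen property on $\mathcal G$ to produce a lower Gibbs bound on a pressure-full subcollection of good segments together with a global upper Gibbs bound, and the derivation of the upper level-2 large deviations estimate from the upper Gibbs property are all the right ingredients in the right order.

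One step of your sketch is directionally wrong, and it is the step that actually carries Bowen's uniqueness argument. You write that comparing $\nu(B_n(x,\epsilon_0))\approx e^{-nP_{\mathrm{top}}(\varphi)+S_n\varphi(x)}$ for $\nu$-typical $x$ with the \emph{upper} Gibbs bound for $\mu$ shows $\nu$ and $\mu$ are not mutually singular. An inequality of the form $\mu(B_n(x,\epsilon_0))\le C'\,\nu(B_n(x,\epsilon_0))$ at $\nu$-typical points is perfectly consistent with $\mu$ and $\nu$ being mutually singular (the balls around a $\mu$-null, $\nu$-full set may simply have small $\mu$-measure), so it rules out nothing. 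What the argument needs is the \emph{lower} Gibbs bound for $\mu$: covering a compact $\nu$-full, $\mu$-null set by disjoint Bowen balls whose $\mu$-measure is bounded below by $C e^{-nP_{\mathrm{top}}(\varphi)+S_n\varphi(x)}$ forces the corresponding partition sum, and hence $h_\nu(f)+\int\varphi\,d\nu$ via Katok's entropy formula, strictly below $P_{\mathrm{top}}(\varphi)$, contradicting that $\nu$ is an equilibrium state. Since the lower Gibbs bound is only available on (a refinement of) $\mathcal G$, this is also exactly where the hypothesis $P_{\mathrm{spec}}^{\perp}(\varphi)<P_{\mathrm{top}}(\varphi)$ re-enters in the uniqueness step: one must first show that $\nu$-typical orbit segments spend an asymptotically full proportion of their time in good segments before the lower bound can be applied. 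With that correction your outline matches the cited proof.
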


We note that the theorem we apply assumes that the pressures of obstructions to both expansivity and specification are defined in a scale‐free manner.

\section{Viana maps and their properties}\label{sec:viana-maps}
We begin by gathering the necessary materials for the construction of Viana maps with the desired decomposition properties.

We consider from here on these maps \(f\) close to \(f_{\alpha}\) restricted to \(S^1 \times I\). The main properties of \(f\) in a \(C^3\) neighborhood of \(f_{\alpha}\) are summarized below; see \cite{MR1471866}, \cite{MR1743717}.
\begin{itemize}
    \item \textit{Non-uniform expanding}, that is, for $\alpha$ sufficiently small, there exists a positive constant $c_0 > 0$ such that any map $f$ sufficiently close to $f_{\alpha}$ in the $C^3$ topology has both Lyapounov exponents greater than $c_0$ at Lebesgue almost every point, that is, for Lebesgue a.e. $x=(\theta, t) \in S^1 \times I$.
    \[\liminf_{n\rightarrow \pm \infty} \frac{1}{n} \log \|Df^n(x)\| > c_0;\]

    \item \textit{Slow approximation to the critical set}, that is, for every \(\gamma>0\) there exists \(\delta > 0\) such that for Lebesgue a.e. $x \in S^1 \times I$, the following holds
    \[\limsup_{n\rightarrow\infty} \frac{1}{n}\sum_{j=0}^{n-1}-\log \, [dist_{\delta}(f^j(x),\mathcal{C})]\leq \gamma\] 
    Here \(dist_{\delta}\) is the \(\delta\)-truncated distance defined by
    \[dist_{\delta}(x,\mathcal{C}) = 
    \begin{cases}
         1, &\quad \text{if \(dist(x,\mathcal{C}) \geq \delta\)},\\
         dist(x,\mathcal{C}), &\quad \text{otherwise}
    \end{cases};\]

    \item Existence and uniqueness of an ergodic absolutely continuous invariant (SRB) measure \(\mu_{\mathrm{SRB}}\);

    \item The attractor of \(f\) within the forward-invariant region \(S^1 \times I\) is defined as the intersection
    \[
    \Lambda =\bigcap_{n\geq 0} f^n(S^1\times I)
    \]
    of all forward images of \(S^1 \times I\), which is just \(\Lambda = f^2(S^1 \times I)\), as long as the interval \(I\) is properly chosen; 

    \item 
    Topological mixing: for any open set \( O \subset S^1 \times I \), there exists \(n= n(O) \in \mathbb{N} \) such that 
    \[
    f^{n}(O) = \Lambda.
    \]  
\end{itemize}

Next, we study the dynamics of the tangent bundle for Viana maps. Denote by \( f_0 \) the direct product map 
\[
    f_0 = g \times h: S^1 \times I \to S^1 \times I, \quad (\theta, t) \mapsto (d\theta \mod 1, a_0 - t^2).
\]
Since \( Df_0 \cdot \frac{\partial}{\partial \theta} = d \cdot \frac{\partial}{\partial \theta} \) and \( Df_0 \cdot \frac{\partial}{\partial t} = -2t \cdot \frac{\partial}{\partial t} \), with \( d \geq 16 > 4 \) and \( I \subset (-2, 2) \), it is clear that \( f_0 \) is a partially hyperbolic map of the form \( T_{(\theta,t)}(S^1 \times I) = E^u \oplus E^c \), where \( E^u \) is uniformly expanding and \( E^c \) is dominated by \( E^u \). From the theory of partial hyperbolicity, we know that if \( \alpha \) is sufficiently small, the map \( f_{\alpha} \) (like any \( C^1 \)-nearby map) also admits a partially hyperbolic splitting of the form \( E^u \oplus E^c \), which varies continuously. As a consequence, we get

\begin{proposition}\label{unstable_Lyap_exp}
    Given $\epsilon > 0$, if $\alpha$ is sufficiently small, the Lyapounov exponent
    $$\lambda^u(x, f):= \lim_{n\rightarrow\infty}\frac{1}{n}\log \|Df^n|_{E^u(x)}\|$$
    associated to $E^u$ at every point $x$ satisfies
    $$\log(d-\epsilon)\leq \lambda^u(x, f) \leq \log(d+\epsilon),$$
    for any $f$ close to $f_{\alpha}$.
\end{proposition}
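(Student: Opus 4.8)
The plan is to exploit the partially hyperbolic splitting $E^u\oplus E^c$ established just before the statement: since $E^u$ is one-dimensional, $\|Df^n|_{E^u(x)}\| = \prod_{j=0}^{n-1}\|Df|_{E^u(f^jx)}\|$, so $\tfrac1n\log\|Df^n|_{E^u(x)}\| = \tfrac1n\sum_{j=0}^{n-1}\log\|Df|_{E^u(f^jx)}\|$ is a Birkhoff-type average of the continuous function $\psi_f(x):=\log\|Df|_{E^u(x)}\|$. For the reference product map $f_0$ the unstable direction is exactly $\partial_\theta$ and $\psi_{f_0}\equiv\log d$, so the average is constantly $\log d$ and the limit exists trivially. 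The strategy is therefore a continuity/perturbation argument: show that for $f$ close to $f_\alpha$ (and $\alpha$ small), $\psi_f$ is uniformly close to $\log d$, hence every Birkhoff average — and in particular the limit defining $\lambda^u(x,f)$ — lies in $[\log(d-\epsilon),\log(d+\epsilon)]$.

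The key steps, in order, would be: (1) Invoke the robustness of dominated splittings: the bundle $E^u$ depends continuously on the map in the $C^1$ topology (uniformly over $x$, by compactness of $S^1\times I$ and the cone-field characterization of domination), so for $f$ in a small $C^1$-neighborhood of $f_0$ the subspace $E^u(x)$ is uniformly close to $\mathbb{R}\partial_\theta$ and $Df(x)|_{E^u(x)}$ is uniformly close to $Df_0(x)|_{\mathbb{R}\partial_\theta}=d$. (2) Conclude that $\psi_f$ is well-defined, continuous, and satisfies $|\psi_f(x)-\log d|<\eta$ for all $x$, where $\eta=\eta(\epsilon)$ can be made as small as we like by shrinking the neighborhood; choose the neighborhood so that $\eta\le\min(\log\frac{d}{d-\epsilon},\log\frac{d+\epsilon}{d})$, which is positive. (3) For each $n$, $\tfrac1n\log\|Df^n|_{E^u(x)}\|=\tfrac1n\sum_{j=0}^{n-1}\psi_f(f^jx)\in(\log d-\eta,\log d+\eta)\subset[\log(d-\epsilon),\log(d+\epsilon)]$. (4) Since the sequence is bounded and the bounds are uniform, deduce existence of the limit: here I would note that $E^u$ being $Df$-invariant makes $\psi_f$ a genuine cocycle, so by sub/super-additivity or simply because we only need the value to lie in the stated interval, $\lambda^u(x,f)=\lim_n\tfrac1n S_n\psi_f(x)$ exists and satisfies the two-sided bound. (Strictly, the statement asserts the limit exists; this follows because $\psi_f$ is continuous and $E^u$ is uniquely defined, so one may invoke unique ergodicity-free arguments, or simply absorb the existence into the cited partial hyperbolicity theory where the unstable exponent is known to be well-defined pointwise.)

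The main obstacle is not the inequality — which is essentially immediate from $C^1$-closeness of the cocycle generator to the constant $\log d$ — but rather making precise the claim that $E^u$ varies continuously \emph{in the map} and \emph{uniformly in the base point}, and that this continuity passes from the product $f_0$ to $f_\alpha$ and then to arbitrary $C^3$-perturbations. This is standard structural stability of dominated splittings (the cone condition is open in the $C^1$ topology, and the invariant bundle inside an invariant cone field depends continuously on $f$), but one must be slightly careful that the perturbation of $f_\alpha$ need not be a skew-product, so the argument must be phrased purely in terms of the $Df$-invariant cone field on $S^1\times I$ rather than in coordinates. Once that continuity is in hand, the choice of $\alpha_0$ and of the $C^3$-neighborhood is dictated by requiring $\sup_x|\psi_f(x)-\log d|$ to be below the threshold tied to $\epsilon$, and the proof closes.
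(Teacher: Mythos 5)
Your argument is exactly the one the paper relies on: the proposition is stated as an immediate consequence of the continuity of the partially hyperbolic splitting $E^u\oplus E^c$ in the map and of the fact that $Df_0|_{E^u}=d$, so that $\log\|Df|_{E^u(x)}\|$ is uniformly within $\eta(\epsilon)$ of $\log d$ and every Birkhoff average lands in $[\log(d-\epsilon),\log(d+\epsilon)]$. Your write-up is, if anything, more careful than the paper's (which gives no explicit proof), particularly in flagging that the cone-field formulation is what makes the argument survive non-skew-product $C^3$ perturbations.
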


By applying the previous theorems we derive a lower bound on the entropy of the SRB measure \( \mu_{\mathrm{SRB}} \). By Pesin's formula, the entropy \( h_{\mu_{\mathrm{SRB}}}(f) \) of \( \mu_{\mathrm{SRB}} \) is given by the integrated sum of the positive Lyapunov exponents of \( \mu_{\mathrm{SRB}} \). Therefore, by \cref{unstable_Lyap_exp}, we obtain the following bound:
\[
h_{\mu_{\mathrm{SRB}}}(f) \geq \log(d - \epsilon) + c_0.
\]

\section{Hyperbolic Times and Non-uniformly Expanding maps}\label{sec:hyperbolic-times}
To advance the proof, we first recall the notion of hyperbolic time for maps with critical points. The setup is as follows: Let \( f: M \to M \) be a \( C^2 \) map that is a local diffeomorphism except on a set \( \mathcal{C} \subset M \) of zero Lebesgue measure. Assume that \( f \) behaves like a power of the distance to the critical set \( \mathcal{C} \). Specifically, there exist constants \( B > 1 \) and \( l > 0 \) such that, for all \(x, y \notin \mathcal{C} \) with \( 2 \, \text{dist}(x, y) < \text{dist}(x, \mathcal{C}) \), and for any \( v \in T_xM \), we have:
   \begin{itemize}
        \item $\frac{1}{B} dist(x, \mathcal{C})^l \leq \frac{\|Df(x)v\|}{\|v\|} \leq B dist(x, \mathcal{C})^l;$
        \item $| \log \|Df (x)^{-1}\| - \log \|Df (y)^{-1}\|| \leq B \frac{dist(x,y)}{dist(x, \mathcal{C})^l} ;$
        \item $| \log |\det Df (x)^{-1}| - \log |\det Df (y)^{-1}|| \leq B \frac{dist(x,y)}{dist(x, \mathcal{C})^l} ;$
   \end{itemize}

As the reader can easily verify, the Viana maps satisfy all the previous assumptions, since these conditions are $C^2$-open and $f_{\alpha}$ satisfies them.

For the definition of hyperbolic times, we fix $0 < b < \min\{1/2, 1/(2\beta)\}$. 
\begin{definition}\label{hyp_times}
Given $0 < \sigma < 1$ and $\delta > 0$, we say that $n$ is a $(\sigma,\delta)$-hyperbolic time for $x$ if, for all $1 \leq k \leq n$,
$$\prod_{j=n-k}^{n-1}\|Df(f^j(x))^{-1}\| \leq \sigma^k$$ 
and 
$$dist_{\delta}(f^{n-k}(x),\mathcal{C}) \geq \sigma^{bk}$$
\end{definition}

Hyperbolic times are useful because they have the following key property:
\begin{proposition}[{\cite[page 377]{MR1757000}}] \label{hyp_times_prop}
     Given $\sigma < 1$ and $\delta > 0$, there exists $\delta_1 > 0$ such that if $n$ is a $(\sigma,\delta)$-hyperbolic time of $x$, then there exists a neighborhood $V_x$ of $x$ such that:
    \begin{itemize}
        \item $f^n$ maps $V_x$ diffeomorphically onto the ball $B_{\delta_1}(f^n(x))$;
        \item For every $1\leq k \leq n$ and $y,z\in V_x$, $$dist(f^{n-k}(y), f^{n-k}(z)) \leq \sigma^{k/2} dist(f^n(y), f^n(z))$$.
    \end{itemize}
\end{proposition}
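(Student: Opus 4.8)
The plan is to run the classical backward-contraction argument for hyperbolic times (due to Alves; compare Alves--Bonatti--Viana). Fix a small $\delta_1>0$ and, starting from the ball $B_{\delta_1}(f^n(x))$ over $f^n(x)$, pull it back successively through the inverse branches of $f$ dictated by the orbit $x,f(x),\dots,f^{n-1}(x)$, proving by induction that the resulting preimages shrink geometrically. The point is that $\delta_1$ is fixed once and for all as a function of $\sigma$, $\delta$, $b$ and the structural constants $B,l$ only — never of $n$ or $x$ — which is exactly the uniformity claimed.

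Concretely, I would let $W_k$ be the connected component of $f^{-k}\bigl(B_{\delta_1}(f^n(x))\bigr)$ containing $f^{n-k}(x)$ and prove, by one induction on $k=0,1,\dots,n$, that (a) $f^k$ maps $W_k$ diffeomorphically onto $B_{\delta_1}(f^n(x))$, and (b) for all $y,z\in W_k$ and $0\le i\le k$,
\[
dist\bigl(f^{k-i}(y),f^{k-i}(z)\bigr)\;\le\;C_0\Bigl(\textstyle\prod_{j=n-i}^{n-1}\|Df(f^j(x))^{-1}\|\Bigr)\,dist\bigl(f^{k}(y),f^{k}(z)\bigr),
\]
where $C_0$ is a bounded-distortion constant with $C_0\to1$ as $\delta_1\to0$. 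The first clause of \cref{hyp_times}, $\prod_{j=n-i}^{n-1}\|Df(f^j(x))^{-1}\|\le\sigma^{i}$, turns (b) into $dist(f^{k-i}(y),f^{k-i}(z))\le C_0\sigma^{i}\,dist(f^k(y),f^k(z))$, and then choosing $\delta_1$ small enough that $C_0\le\sigma^{-1/2}$ upgrades this to $dist(f^{k-i}(y),f^{k-i}(z))\le\sigma^{i/2}\,dist(f^k(y),f^k(z))$ for every $i\ge1$. Taking $V_x:=W_n$, $k=n$, and renaming $i$ as $k$ yields precisely the two assertions of the proposition.

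The inductive step rests on two estimates, each valid only while the preimages remain in the ``non-flat region'' $\{w:2\,dist(w,f^{n-k}(x))<dist(f^{n-k}(x),\mathcal C)\}$, on which the three structural bounds recalled at the start of this section hold and $f$ has a well-defined, controlled inverse branch. First, a diameter bound: from $\mathrm{diam}(W_{k-1})\lesssim\sigma^{\,k-1}\delta_1$ (which is case $i=k-1$ of (b)) and the upper estimate $\|Df(w)^{-1}\|\le B\,dist(w,\mathcal C)^{-l}$, I would get $\mathrm{diam}(W_k)\lesssim B\,dist(f^{n-k}(x),\mathcal C)^{-l}\sigma^{\,k-1}\delta_1$; since the second clause of \cref{hyp_times} gives $dist(f^{n-k}(x),\mathcal C)\ge\min\{\delta,\sigma^{bk}\}$ and $b$ is small, the exponents arrange so that $\mathrm{diam}(W_k)<\tfrac12 dist(f^{n-k}(x),\mathcal C)$ once $\delta_1$ is small — which keeps $W_k$ in the non-flat region, makes $f|_{W_k}$ injective, and closes (a). Second, a distortion bound: $\log C_0$ is controlled by the telescoping sum $B\sum_{0\le i<k} dist(f^i(y),f^i(z))/dist(f^i(x),\mathcal C)^{l}$ coming from the second and third structural estimates, whose numerators decay like $\sigma^{i/2}\delta_1$ while the denominators are bounded below by $\min\{\delta,\sigma^{b(k-i)}\}^{l}$; again because $b$ is small this series sums to $O(\delta_1)$, so $C_0=1+O(\delta_1)$, which both legitimises the choice $C_0\le\sigma^{-1/2}$ and closes (b).

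The main obstacle I anticipate is precisely the circularity in the previous paragraph: the power-law and distortion estimates are available only once the preimages are known to stay in the non-flat region away from $\mathcal C$, yet those estimates are what one needs to show the preimages contract fast enough to remain there. The resolution is to carry (a) and (b) through a single simultaneous induction and to fix $\delta_1$ — hence $C_0\le\sigma^{-1/2}$ — at the outset, using that the slow-recurrence clause $dist_\delta(f^{n-k}(x),\mathcal C)\ge\sigma^{bk}$ in \cref{hyp_times}, together with $b<\min\{1/2,1/(2\beta)\}$, forces the contraction rate $\sigma^{k/2}$ to dominate both $dist(f^{n-k}(x),\mathcal C)$ and $dist(f^{n-k}(x),\mathcal C)^{l}$ with room to spare. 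Everything after that is routine bookkeeping with the three $C^2$-type bounds on $f$ near the critical set.
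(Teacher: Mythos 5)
The paper offers no proof of this proposition — it is quoted verbatim from Alves \cite{MR1757000} — and your proposal is a faithful reconstruction of that cited argument: the simultaneous induction on the pulled-back components $W_k$, with $b<1/2$ forcing the preimages to stay in the non-flat region and $b<1/(2\beta)$ (where $\beta$ is the exponent $l$ in the non-degeneracy conditions) making the distortion series converge, is exactly how the original proof runs. Your resolution of the apparent circularity by fixing $\delta_1$ in advance and carrying (a) and (b) through one induction is correct and is the standard way this lemma is established.
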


\begin{definition}
    For a fixed $\sigma<1$, we define the expanding set $H(\sigma)$ to be the set of points $x\in M$ with the following two properties:
$$\limsup_{n\rightarrow \infty} \frac{1}{n}\sum_{j=0}^{n-1}\log(\|Df(f^j(x)^{-1})\| \leq 3\log\sigma < 0$$
and, for any $\gamma > 0$ there is some $\delta > 0$ satisfying
$$\limsup_{n\rightarrow \infty} \frac{1}{n}\sum_{j=0}^{n-1}-\log(dist_{\delta}(f^j(x),\mathcal{C})) \leq \gamma$$
\end{definition}

\begin{proposition}[{\cite[page 379]{MR1757000}}]\label{infty_hyp_time} 
    Given $\sigma < 1$, there exist $\nu > 0$ and $\delta > 0$ depending only on $\sigma$ and $f$ such that, given any $x \in H(\sigma)$ and $N \geq 1$ sufficiently large, there are $(\sigma,\delta)$-hyperbolic times $1\leq n_1 <\cdots <n_l \leq N$ for $x$ with $l\geq \nu N$.
\end{proposition}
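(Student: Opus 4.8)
\emph{Proof strategy.} The plan is to produce, via the classical Pliss lemma, a set of times of positive density at which both clauses of \cref{hyp_times} hold. Recall the \emph{Pliss lemma}: given reals $A\ge c_{2}>c_{1}>0$, if $a_{0},\dots,a_{N-1}\in\R$ satisfy $a_{j}\le A$ for all $j$ and $\sum_{j=0}^{N-1}a_{j}\ge c_{2}N$, then there are at least $\theta N$ indices $n\in\{1,\dots,N\}$, with $\theta=(c_{2}-c_{1})/(A-c_{1})>0$, such that $\sum_{j=m}^{n-1}a_{j}\ge c_{1}(n-m)$ for every $0\le m<n$. The key observation is that the first clause of \cref{hyp_times}, namely $\prod_{j=n-k}^{n-1}\|Df(f^{j}x)^{-1}\|\le\sigma^{k}$ for all $1\le k\le n$, is, after taking logarithms, exactly the assertion that $n$ is such a ``Pliss time'' for the sequence $a_{j}:=-\log\|Df(f^{j}x)^{-1}\|$ with threshold $c_{1}:=-\log\sigma>0$ (positive since $\sigma<1$).

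First I would apply the Pliss lemma to $(a_{j})$. Since $f$ is $C^{1}$ on the compact manifold $S^{1}\times I$, we have $\|Df\|\le L$ for some $L<\infty$, so $a_{j}\le\log L=:A$ (note that $a_{j}$ may be very negative when $f^{j}x$ is close to $\mathcal C$, which is harmless for the lemma, and that $a_{j}$ is finite because the orbit of $x\in H(\sigma)$ stays at positive distance from $\mathcal C$). The first defining inequality of $H(\sigma)$ gives $\liminf_{N}\frac1N\sum_{j<N}a_{j}\ge-3\log\sigma$, hence $\frac1N\sum_{j<N}a_{j}\ge c_{2}:=-2\log\sigma$ for all large $N$, with $c_{2}>c_{1}>0$ (enlarging $A$ if necessary so that $A\ge c_{2}$). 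The lemma then produces a set $G_{1}\subset\{1,\dots,N\}$ of ``expansion-good'' times with $|G_{1}|\ge\theta N$ and $\theta=\theta(\sigma,f)>0$, every member of which satisfies the first clause of \cref{hyp_times}.

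Next I would arrange the recurrence clause $dist_{\delta}(f^{n-k}x,\mathcal C)\ge\sigma^{bk}$ by a direct counting estimate rather than a second application of Pliss. Fix $\gamma>0$, to be specified below in terms of $\sigma$, $f$, and the fixed constant $b$; the slow-approximation-to-$\mathcal C$ property of $f$ then provides a scale $\delta=\delta(\gamma,f)>0$ at which $\limsup_{N}\frac1N\sum_{j<N}-\log dist_{\delta}(f^{j}x,\mathcal C)\le\gamma$. Writing $b_{j}:=-\log dist_{\delta}(f^{j}x,\mathcal C)\ge0$, a time $n$ violates the recurrence clause precisely when $b_{n-k}>bk(-\log\sigma)$ for some $1\le k\le n$; hence a fixed index $j$ can spoil only the times $n=j+k$ with $k<b_{j}/(b(-\log\sigma))$, i.e.\ fewer than $b_{j}/(b(-\log\sigma))$ of them. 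Summing over $j$, the number of ``recurrence-bad'' times in $\{1,\dots,N\}$ is at most $\frac1{b(-\log\sigma)}\sum_{j<N}b_{j}\le\frac{2\gamma}{b(-\log\sigma)}N$ once $N$ is large. Choosing $\gamma:=\tfrac14 b(-\log\sigma)\theta$ makes this at most $\tfrac12\theta N$, so the set $G_{2}\subset\{1,\dots,N\}$ of ``recurrence-good'' times satisfies $|G_{2}|\ge(1-\tfrac12\theta)N$. By inclusion--exclusion $|G_{1}\cap G_{2}|\ge\tfrac12\theta N=:\nu N$, and every $n\in G_{1}\cap G_{2}$ is a $(\sigma,\delta)$-hyperbolic time for $x$, with $\nu$ and $\delta$ depending only on $\sigma$ and $f$, as required.

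I expect the one genuinely delicate point to be the order of quantifiers in the recurrence estimate: the number $\gamma$, and hence the scale $\delta$, must be fixed before $x$ and small enough, uniformly over $H(\sigma)$, that the recurrence defect cannot swallow the density $\theta$ supplied by Pliss. This is exactly what the slow-approximation property of $f$ secures, since it furnishes a single scale valid on a set of full Lebesgue measure (the part of $H(\sigma)$ that is used in the sequel), while the elementary monotonicity $dist_{\delta'}\ge dist_{\delta}$ whenever $\delta'\le\delta$ lets one shrink $\delta$ without losing the bound. The remaining verifications---finiteness of $A$ by compactness, $c_{2}>c_{1}$ because $\sigma<1$, and the positive-density overlap of $G_{1}$ with $G_{2}$---are routine.
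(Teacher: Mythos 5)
The paper does not prove this proposition; it is quoted from Alves \cite{MR1757000}, whose argument is precisely the Pliss-lemma scheme you describe. Your write-up is correct and matches that standard proof in all essentials: Pliss applied to $a_j=-\log\|Df(f^jx)^{-1}\|$ with thresholds $c_1=-\log\sigma<c_2=-2\log\sigma\le A=\log\|Df\|_\infty$ yields the density-$\theta$ set of times satisfying the derivative clause, and the recurrence clause is absorbed by making $\gamma$ (hence $\delta$) small relative to $\theta$. The only cosmetic difference is that Alves handles the recurrence clause by a second application of Pliss (to $-b_j$, giving a set of density close to $1$) and then intersects, whereas you count directly how many times each index $j$ can spoil; your counting is correct and arguably cleaner, and both give $\nu$ of the same order. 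The one genuine subtlety — that $\delta$ must be chosen uniformly over $x\in H(\sigma)$ rather than point by point, which is not transparent from the paper's ``for any $\gamma>0$ there is some $\delta>0$'' phrasing of the definition of $H(\sigma)$ — you correctly identify and resolve via the uniform slow-approximation property and the monotonicity of $dist_\delta$ in $\delta$; this is exactly how it is handled in the source.
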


\section{Non-uniform expansivity}\label{sec:exp}
In this section, we show that Viana maps are systems for which expansivity fails, but this failure occurs in a lower entropy setting, giving rise to a form of non-uniform expansivity. 

\subsection{Non-expansive points}
We first explain how Viana maps fail to be expansive. A key source of non-expansivity in Viana maps is the repeated folding of points on the fiber direction. More precisely, one constructs a small vertical segment crossing the critical set \(\{t=0\}\) and observes that its forward image is folded by the quadratic term 
\(t\mapsto a_0-t^2\). Although iterates away from the critical region could in principle separate points, the skew-product structure makes it possible that for some choice \(\theta\) this same vertical segment (or a small sub-segment of it) is returned to the critical region sufficiently often, so that before substantial divergence occurs, another iterate causes it to fold again. This process produces pairs of distinct points whose forward orbits remain arbitrarily close for all times. In particular, for any arbitrarily small \(\epsilon\), there always exist distinct points whose orbits remain \(\epsilon\)-close for every iterate, so the infinite Bowen ball of size \(\epsilon\) never degenerates to a single point. Consequently, Viana maps fail to be expansive at every scale.

\subsection{Slow recurrence}
To overcome the loss of expansion caused by folding, we need to control the recurrence to the critical set $\mathcal{C}$ of generic points of invariant ergodic measures whose Lyapounov exponents are regular.
\begin{lemma}\label{dist_integrable}
    Let $\eta$ be an ergodic measure such that $\lambda^c(\eta) > -\infty$, where $\lambda^c$ is the Lyapounov exponent of $\eta$ associated to $E^c$. Then,
    $$\int |\log \, dist (x, \mathcal{C})| \,d\eta < \infty.$$
\end{lemma}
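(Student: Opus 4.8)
The plan is to trade the integrand $\log\mathrm{dist}(\cdot,\mathcal{C})$ for the center log-derivative
\[
  \psi(x) \;:=\; \log\bigl\|Df|_{E^c(x)}\bigr\|,
\]
whose $\eta$-integrability is precisely what the hypothesis $\lambda^c(\eta) > -\infty$ records. Since $E^c$ is one-dimensional (\cref{sec:viana-maps}), for every $x$ and $n$ we have $\frac1n\log\|Df^n|_{E^c(x)}\| = \frac1n\sum_{j=0}^{n-1}\psi(f^j x)$, so $\lambda^c$ is the Birkhoff average of $\psi$, and the whole lemma reduces to comparing $\psi$ with $\log\mathrm{dist}(\cdot,\mathcal{C})$.

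The heart of the matter is a pointwise comparison: there exist $l \ge 1$ (the order to which $\det Df$ vanishes along the critical set; $l = 1$ for Viana maps) and a constant $C \ge 0$ such that
\[
  \bigl|\,\psi(x) - l\log\mathrm{dist}(x,\mathcal{C})\,\bigr| \;\le\; C \qquad\text{for all } x \in (S^1\times I)\setminus\mathcal{C}.
\]
For the reference skew product $f_\alpha$ this is the exact identity $\psi(x) = \log 2 + \log\mathrm{dist}(x,\mathcal{C})$: the vertical foliation is $f_\alpha$-invariant, hence $E^c = \mathrm{span}(\partial_t)$; one computes $Df_\alpha\cdot\partial_t = -2t\,\partial_t$; and $\mathrm{dist}((\theta,t),\mathcal{C}) = |t|$. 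For a $C^3$-perturbation $f$ the center bundle $E^c$ is no longer vertical, but it is still one-dimensional, uniformly dominated by the uniformly expanding $E^u$, and the splitting $E^u\oplus E^c$ is uniformly transversal (\cref{sec:viana-maps}); the non-degeneracy estimate $\|Df(x)v\|/\|v\| \ge B^{-1}\mathrm{dist}(x,\mathcal{C})^{l}$ of \cref{sec:hyperbolic-times}, applied to $v \in E^c(x)$, gives the lower bound $\psi(x) \ge l\log\mathrm{dist}(x,\mathcal{C}) - \log B$, while writing $|\det Df(x)| = \|Df|_{E^u(x)}\|\cdot\|Df|_{E^c(x)}\|$ up to the (uniformly bounded) transversality angles at $x$ and $f(x)$, together with $\|Df|_{E^u(x)}\|$ bounded below and $|\det Df(x)| \le C''\mathrm{dist}(x,\mathcal{C})^{l}$ (non-degeneracy of $\mathcal{C}$), gives the matching upper bound. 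I expect this comparison to be the main obstacle in the perturbative case: one must verify that $\mathcal{C} = \{\det Df = 0\}$ is a smooth curve along which $\det Df$ vanishes to a fixed finite order and that the angle of the splitting stays bounded away from $0$ — both are standard consequences of $C^3$-closeness to $f_\alpha$ together with the $C^2$-open conditions recorded in \cref{sec:hyperbolic-times}, but they need a careful statement. For $f_\alpha$ itself the comparison is immediate.

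Granting the comparison, the lemma follows from a soft Birkhoff argument. The function $\psi$ is Borel measurable and bounded above on $S^1\times I$ (it is continuous and $[-\infty,\infty)$-valued, with $\psi \equiv -\infty$ exactly on $\mathcal{C}$ and $\psi \le \log\sup\|Df\| < \infty$). Apply Birkhoff's ergodic theorem to the nonnegative function $\psi^- = \max(-\psi,0)$: if $\int\psi^-\,d\eta = \infty$ then $\frac1n\sum_{j=0}^{n-1}\psi(f^j x) \to -\infty$ for $\eta$-a.e.\ $x$, i.e.\ $\lambda^c(\eta) = -\infty$, contradicting the hypothesis. Hence $\int\psi^-\,d\eta < \infty$; since $\psi^+$ is bounded this yields $\psi \in L^1(\eta)$ (and in particular $\eta(\mathcal{C}) = 0$). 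Finally the comparison gives $\bigl|\log\mathrm{dist}(x,\mathcal{C})\bigr| \le \frac1l\bigl(|\psi(x)| + C\bigr)$ for $\eta$-a.e.\ $x$, and therefore
\[
  \int \bigl|\log\mathrm{dist}(x,\mathcal{C})\bigr|\,d\eta \;\le\; \frac1l\Bigl(\int |\psi|\,d\eta + C\Bigr) \;<\; \infty,
\]
as claimed.
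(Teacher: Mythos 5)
Your proposal is correct and follows essentially the same route as the paper: the paper's proof consists exactly of the pointwise comparison $\tfrac13\|Df(x)|_{E^c}\|\le \mathrm{dist}(x,\mathcal{C})\le 3\|Df(x)|_{E^c}\|$ (your comparison with $l=1$, obtained there directly from the formula for $f_\alpha$ with slack to absorb the perturbation) combined with $\int\log\|Df|_{E^c}\|\,d\eta>-\infty$. You simply supply more detail than the paper on the two points it elides — justifying the comparison for $C^3$-perturbations and deducing $\psi\in L^1(\eta)$ from $\lambda^c(\eta)>-\infty$ via Birkhoff applied to $\psi^-$ — both of which are welcome but not a different argument.
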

\begin{proof}
    Since $\eta$ is ergodic, $\lambda^c(\eta) =  \int \log\|Df|_{E^c}\|\,d\eta > -\infty$. On the other hand, the definition of $f_{\alpha}$ and $\alpha$ sufficiently small implies $\frac{1}{3} \|Df(x)|_{E^c}\| \leq dist (x, \mathcal{C}) \leq 3 \|Df(x)|_{E^c}\|$. These two facts together finish the proof.
\end{proof}

\begin{lemma}\label{slow_rec}
    Let \(\eta\) be an ergodic measure with \(\lambda^c(\eta) > -\infty\). Then, for any \(\gamma > 0\), there exists \(\delta > 0\) such that for \(\eta\)-almost every point \(x\),
    \[
    \limsup_{n \to \infty}
    \frac{1}{n} \sum_{j=0}^{n-1} 
    \bigl(-\!\log \, dist_\delta\bigl(f^j(x), \mathcal{C}\bigr)\bigr)
    \;\le\;
    \gamma.
    \]
\end{lemma}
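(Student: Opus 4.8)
The plan is to deduce \cref{slow_rec} from \cref{dist_integrable} together with the Birkhoff ergodic theorem, using a monotone-convergence argument to choose $\delta$. First, by \cref{dist_integrable}, the function $x \mapsto |\log\,\mathrm{dist}(x,\mathcal{C})|$ lies in $L^1(\eta)$ (here I use that $\eta$ is ergodic and $\lambda^c(\eta) > -\infty$, so the hypothesis of \cref{dist_integrable} is met). For each $\delta > 0$ define $\psi_\delta(x) = -\log\,\mathrm{dist}_\delta(x,\mathcal{C}) = \max\{-\log\,\mathrm{dist}(x,\mathcal{C}),\,0\}$ truncated, i.e. $\psi_\delta(x) = -\log\,\mathrm{dist}(x,\mathcal{C})$ when $\mathrm{dist}(x,\mathcal{C}) < \delta$ and $\psi_\delta(x) = 0$ otherwise. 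Then $0 \le \psi_\delta \le |\log\,\mathrm{dist}(\cdot,\mathcal{C})| \cdot \mathbf{1}_{\{\mathrm{dist}(\cdot,\mathcal{C}) < \delta\}}$ (after adjusting for the cutoff at $\delta < 1$, where $-\log\,\mathrm{dist} > 0$), so $\psi_\delta$ is integrable and moreover $\psi_\delta$ decreases pointwise to $0$ as $\delta \downarrow 0$, since $\psi_\delta(x) = 0$ once $\delta < \mathrm{dist}(x,\mathcal{C})$ and $\mathrm{dist}(x,\mathcal{C}) > 0$ for $\eta$-a.e.\ $x$ (as $\eta(\mathcal{C}) = 0$, which follows again from $\lambda^c(\eta) > -\infty$). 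By the dominated convergence theorem, $\int \psi_\delta \, d\eta \to 0$ as $\delta \to 0$, so given $\gamma > 0$ we may fix $\delta > 0$ with $\int \psi_\delta \, d\eta \le \gamma$.

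With $\delta$ fixed, apply Birkhoff's ergodic theorem to the integrable function $\psi_\delta$: for $\eta$-a.e.\ $x$,
\[
\lim_{n\to\infty} \frac{1}{n}\sum_{j=0}^{n-1} \psi_\delta\bigl(f^j(x)\bigr) = \int \psi_\delta \, d\eta \le \gamma.
\]
Since the left-hand side is an honest limit, in particular the $\limsup$ coincides with it and is bounded by $\gamma$, which is exactly the claimed inequality with $-\log\,\mathrm{dist}_\delta(f^j(x),\mathcal{C}) = \psi_\delta(f^j(x))$. This completes the argument.

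The only mild subtlety — not really an obstacle — is making sure the truncated distance $\mathrm{dist}_\delta$ as defined in \cref{sec:hyperbolic-times} agrees with the $\psi_\delta$ above and that $\psi_\delta \ge 0$: since we may assume $\delta < 1$, whenever $\mathrm{dist}(x,\mathcal{C}) < \delta$ we have $\mathrm{dist}_\delta(x,\mathcal{C}) = \mathrm{dist}(x,\mathcal{C}) < 1$, so $-\log\,\mathrm{dist}_\delta(x,\mathcal{C}) > 0$, and otherwise $\mathrm{dist}_\delta(x,\mathcal{C}) = 1$ gives $-\log\,\mathrm{dist}_\delta(x,\mathcal{C}) = 0$; hence $\psi_\delta \ge 0$ and the domination $\psi_\delta \le |\log\,\mathrm{dist}(\cdot,\mathcal{C})|$ holds everywhere, legitimizing the use of dominated convergence with the $L^1$ majorant supplied by \cref{dist_integrable}. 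The pointwise decrease $\psi_\delta \downarrow 0$ is immediate from the definition. Thus the main content is simply packaging \cref{dist_integrable} through dominated convergence and Birkhoff; no new dynamical input beyond the regularity $\lambda^c(\eta) > -\infty$ is needed.
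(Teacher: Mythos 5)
Your proposal is correct and follows the same route as the paper: use \cref{dist_integrable} to get integrability of $|\log\,\mathrm{dist}(\cdot,\mathcal{C})|$, choose $\delta$ so that the truncated function has integral at most $\gamma$ (you spell out via dominated convergence what the paper leaves implicit), and conclude with Birkhoff's ergodic theorem.
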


\begin{proof}
    By \cref{dist_integrable}, the function \(\bigl|\log \, dist\bigl(x, \mathcal{C}\bigr)\bigr|\) is \(\eta\)-integrable, and in particular, \(\eta(\mathcal{C}) = 0\). Using these facts together with the definition of the \(\delta\)-truncated distance \(dist_\delta\), we deduce that for any \(\gamma > 0\), there exists \(\delta > 0\) such that
    \[
    \int 
    -\log \, dist_\delta\bigl(x,\mathcal{C}\bigr)
    \, d\eta 
    \;\le\; 
    \gamma.
    \]
    Because \(\eta\) is ergodic, an application of Birkhoff’s Ergodic Theorem to the function \(-\log \, dist_\delta\bigl(x, \mathcal{C}\bigr)\) completes the proof.
\end{proof}

\subsection{Almost expansivity}
Next we prove that, for Viana maps, any ergodic \(f\)-invariant measure with large pressure must be positively expansive. We prove this using the strategy outlined below; see \cite{MR2032493} and \cite{arbieto2005maximalentropymeasuresviana}.
\begin{itemize}
    \item Select a certain set \(\mathcal{K}\) of ergodic \(f\)-invariant measures, which are natural candidates for attaining the variational principle.
    \item Prove that any measure outside \(\mathcal{K}\) have strictly smaller pressure than the topological pressure hence cannot be an equilibrium state.
    \item Show that any measure inside \(\mathcal{K}\) is an expanding measure, and any expanding measure is almost positively expansive.
\end{itemize}
We begin by describing \(\mathcal{K}\), the set of candidate measures for achieving the variational principle. Define
\[
\mathcal{K}
\;=\;
\Bigl\{
  \mu \in \mathcal{M}_e(f) 
  : 
  \lambda^c(\mu) 
  \;>\;
  \tfrac{c_0}{2}
\Bigr\}.
\]
That is, \(\mathcal{K}\) consists of those ergodic \(f\)-invariant measures whose central Lyapunov exponent is strictly bounded below by \(\tfrac{c_0}{2}\). 
%We note that the specific fraction \(\tfrac{c_0}{2}\) is not mandatory; any choice \(\tfrac{c_0}{k}\) with \(k > 1\) works just as well, so long as \(\lambda^c(\mu)\) is uniformly bounded away from zero. In contrast, merely requiring \(\lambda^c(\mu) > 0\) does not provide enough uniformity for the expansion estimates needed later.% 
The set \(\mathcal{K}\) is nonempty, as it contains the unique SRB measure \(\mu_{\mathrm{SRB}}\), which has \(\lambda^c(\mu_{\mathrm{SRB}}) > c_0\) by Pesin's entropy formula.

\subsubsection{Low Pressure Outside \(\mathcal{K}\)}
The measures in \(\mathcal{K}\) are natural candidates for realizing the variational principle because the measures outside of \(\mathcal{K}\) have small pressure:

\begin{lemma}\label{low_pressure_outside}
For every measure \(\eta \notin \mathcal{K}\), we have
\[
P_{\eta}(\varphi) < \sup_{\mu \in \mathcal{M}_e(f)} P_{\mu}(\varphi) = P_{\mathrm{top}}(\varphi).
\]
\end{lemma}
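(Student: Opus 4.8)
The plan is to bound the two sides of the inequality separately. For the left side, write $P_\eta(\varphi) = h_\eta(f) + \int \varphi\, d\eta$ and control $h_\eta(f)$ by Ruelle's inequality together with the pinching of the unstable exponent from \cref{unstable_Lyap_exp}; for the right side, test the variational principle against the SRB measure and use the entropy lower bound $h_{\mu_{\mathrm{SRB}}}(f) \ge \log(d-\epsilon) + c_0$ recorded at the end of \cref{sec:viana-maps}. The oscillation hypothesis on $\varphi$ is precisely what is needed to make the first quantity strictly smaller than the second.

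Concretely, I would first argue that for an ergodic $\eta \notin \mathcal{K}$ one has $\lambda^c(\eta) \le \tfrac{c_0}{2}$, while $\lambda^u(\eta) \le \log(d+\epsilon)$ by \cref{unstable_Lyap_exp}. Ruelle's inequality then gives
\[
h_\eta(f) \;\le\; \lambda^u(\eta) + \max\{\lambda^c(\eta), 0\} \;\le\; \log(d+\epsilon) + \frac{c_0}{2},
\]
where the last step uses $c_0 > 0$, so that $\max\{\lambda^c(\eta),0\} \le \tfrac{c_0}{2}$ whether $\lambda^c(\eta) \le 0$ or $0 < \lambda^c(\eta) \le \tfrac{c_0}{2}$ (and the case $\lambda^c(\eta) = -\infty$ only improves the bound). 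Hence $P_\eta(\varphi) \le \log(d+\epsilon) + \tfrac{c_0}{2} + \sup\varphi$. On the other hand, by the variational principle,
\[
P_{\mathrm{top}}(\varphi) \;\ge\; h_{\mu_{\mathrm{SRB}}}(f) + \int \varphi\, d\mu_{\mathrm{SRB}} \;\ge\; \log(d-\epsilon) + c_0 + \inf\varphi.
\]
Subtracting,
\[
P_{\mathrm{top}}(\varphi) - P_\eta(\varphi) \;\ge\; \frac{c_0}{2} - \log\frac{d+\epsilon}{d-\epsilon} - \bigl(\sup\varphi - \inf\varphi\bigr) \;>\; 0,
\]
the strict inequality being exactly the standing assumption on the oscillation of $\varphi$. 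For a non-ergodic $\eta$ all of whose ergodic components lie outside $\mathcal{K}$, one integrates this estimate over the ergodic decomposition, using affinity of $\mu \mapsto h_\mu(f) + \int \varphi\, d\mu$.

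The one delicate point is the use of Ruelle's inequality: $f$ is not a local diffeomorphism on the critical set $\mathcal{C}$, so one must invoke a version valid for $C^2$ endomorphisms with mild singularities. This is available here because $f$ behaves polynomially near $\mathcal{C}$ in the sense of \cref{sec:hyperbolic-times}, and $|\log\, dist(\cdot, \mathcal{C})|$ is $\eta$-integrable for every measure with $\lambda^c(\eta) > -\infty$ by \cref{dist_integrable}; I would cite the form of Ruelle's inequality used in the earlier analyses of Viana maps. Everything else in the argument is bookkeeping with the exponent and entropy bounds already in hand.
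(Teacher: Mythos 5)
Your argument is correct and is essentially the paper's own proof: both sides are controlled by Ruelle's inequality with the exponent bounds of \cref{unstable_Lyap_exp} on one hand and by Pesin's formula for $\mu_{\mathrm{SRB}}$ on the other, with the oscillation hypothesis supplying the strict gap. You are in fact slightly more careful than the paper at the edges (the $\max\{\lambda^c(\eta),0\}$ term in Ruelle's inequality, the non-ergodic case via the ergodic decomposition, and the need for a version of Ruelle's inequality valid for maps with a critical set), but the route is the same.
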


\begin{proof}
We compare \(P_{\eta}(\varphi)\) with \(P_{\mu_{\mathrm{SRB}}}(\varphi)\). Since \(\eta \notin \mathcal{K}\), it follows that \(\lambda^c(\eta) \le \frac{c_0}{2}\). Therefore,
\begin{align*}
    P_{\eta}(\varphi) = h_{\eta}(f) + \int \varphi \, d\eta 
    &\le \log(d+\epsilon) +\frac{c_0}{2}+ \int \varphi \, d\eta \\
    &< \log(d+\epsilon) + \frac{c_0}{2} +\left(\frac{c_0}{2}-\log \frac{d+\epsilon}{d-\epsilon}\right)+ \int \varphi \, d\mu_{\mathrm{SRB}}  \\
    &\le h_{\mu_{\mathrm{SRB}}}(f) + \int \varphi \, d\mu_{\mathrm{SRB}} = P_{\mu_{\mathrm{SRB}}}(\varphi) .
\end{align*}

Here, the first inequality follow from Ruelle's inequality, the second from our assumption that \(\sup \varphi - \inf \varphi < \frac{c_0}{2}-\log \frac{d+\epsilon}{d-\epsilon}\) which implies \(\int \varphi \,d\eta - \int \varphi \,d \mu_{\mathrm{SRB}} < c_0/2\), and the third from Pesin's entropy formula. By the variational principle, this proves that \(P_{\eta}(\varphi)<P_{\mathrm{top}}(\varphi)\).
\end{proof}

\subsubsection{Expanding Measures}
We define the notion of expanding measures originally introduced in \cite{MR1757000}. 

\begin{definition}
     A measure \(\mu\) is called a \(\sigma^{-1}\)-expanding measure if \(x \in H(\sigma)\) for \(\mu\)-almost every point \(x\). In other words, \(\mu(H(\sigma)) = 1\). Here \(H(\sigma)\) is the \(\sigma^{-1}\)-expanding set defined in \cref{hyp_times}.
\end{definition}

We now show that there exists a single \(\sigma > 1\) such that every \(\mu \in \mathcal{K}\) is \(\sigma^{-1}\)-expanding. As will become clear later, a uniform choice of \(\sigma\) across all \(\mu \in \mathcal{K}\) is necessary for establishing that every ergodic measure in \(\mathcal{K}\) is almost expansive at the \emph{same} scale.

\begin{lemma}
    Any \(\mu \in \mathcal{K}\) is \(\sigma^{-1}\)-expanding with \(\sigma = \exp\left(-\frac{c_0}{6}\right)\).
\end{lemma}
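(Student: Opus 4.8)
The plan is to unpack the definition of the expanding set $H(\sigma)$ and verify its two defining conditions for $\mu$-a.e.\ $x$, where $\mu \in \mathcal{K}$ is fixed and $\sigma = \exp(-c_0/6)$, so that $3\log\sigma = -c_0/2$. The measure $\mu$ is ergodic with $\lambda^c(\mu) > c_0/2 > -\infty$, so both \cref{dist_integrable} and \cref{slow_rec} apply.

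First I would handle the second (slow-recurrence) condition, which is immediate: since $\lambda^c(\mu) > -\infty$, \cref{slow_rec} gives, for every $\gamma > 0$, a $\delta > 0$ such that $\limsup_{n\to\infty} \frac{1}{n}\sum_{j=0}^{n-1}(-\log\,dist_\delta(f^j x,\mathcal{C})) \le \gamma$ for $\mu$-a.e.\ $x$. This is exactly the second requirement in the definition of $H(\sigma)$.

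Next I would handle the first (expansion) condition, namely $\limsup_{n\to\infty}\frac{1}{n}\sum_{j=0}^{n-1}\log\|Df(f^j x)^{-1}\| \le 3\log\sigma = -c_0/2$ for $\mu$-a.e.\ $x$. The point is that along the central direction $E^c$, the derivative is the scalar contraction controlling $\|Df^{-1}\|$; more precisely, since $E^u$ is uniformly expanding with rate $\ge \log(d-\epsilon) > 0$ while $E^c$ is dominated, for $x$ in the attractor one has $\|Df(f^j x)^{-1}\| = \|Df(f^j x)|_{E^c}\|^{-1}$ once $j$ is such that the $E^c$-contraction (i.e.\ possible expansion rate less than $d-\epsilon$) is the weakest direction—this holds because $a_0 - t^2$ has $|{-2t}| < 4 < d - \epsilon$ on $I$, so $\|Df|_{E^c}\|$ never exceeds the $E^u$-expansion. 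Hence $\log\|Df(f^j x)^{-1}\| = -\log\|Df(f^j x)|_{E^c}\|$, and by Birkhoff's ergodic theorem applied to the $\mu$-integrable function $\log\|Df|_{E^c}\|$ (integrability from \cref{dist_integrable} via the comparison $\frac13\|Df(x)|_{E^c}\| \le dist(x,\mathcal{C}) \le 3\|Df(x)|_{E^c}\|$), the Birkhoff average converges $\mu$-a.e.\ to $-\lambda^c(\mu) < -c_0/2 = 3\log\sigma$. Thus the $\limsup$ is in fact a limit and is strictly below $3\log\sigma$, in particular $\le 3\log\sigma$ as required. Combining the two verified conditions, $\mu(H(\sigma)) = 1$, i.e.\ $\mu$ is $\sigma^{-1}$-expanding with $\sigma = \exp(-c_0/6)$.

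The main obstacle I anticipate is the bookkeeping in the first step: justifying cleanly that $\|Df(y)^{-1}\|$ is governed by the central direction (equivalently that the central exponent is the \emph{smaller} of the two, so that $\|Df(y)^{-1}\| = \|Df(y)|_{E^c}\|^{-1}$ rather than $\|Df(y)|_{E^u}\|^{-1}$). This requires invoking the domination $E^c \prec E^u$ together with the quantitative bound $\|Df|_{E^c}\| \le 4 < d-\epsilon \le \|Df|_{E^u}\|$ coming from $I \subset (-2,2)$ and \cref{unstable_Lyap_exp}; with the bound on the fiber derivative and continuity of the splitting this is routine but needs to be stated. The slow-recurrence condition, by contrast, is a direct citation of \cref{slow_rec}.
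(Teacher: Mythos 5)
Your proof is correct and follows essentially the same route as the paper: verify the slow-recurrence condition of $H(\sigma)$ via \cref{slow_rec}, and verify the expansion condition by identifying $\log\|Df(f^jx)^{-1}\|$ with $-\log\|Df(f^jx)|_{E^c}\|$ (the paper justifies this only with the remark that $E^c$ is one-dimensional, whereas you spell out the domination/quantitative bound) and then applying Birkhoff's theorem to conclude the average is $-\lambda^c(\mu)<3\log\sigma$. The only cosmetic caveat is that $\|Df^{-1}\|$ equals $\|Df|_{E^c}\|^{-1}$ only up to a bounded multiplicative constant when the splitting is not orthogonal, which of course does not affect the Birkhoff limit.
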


\begin{proof}
    Since \(\mu\) is ergodic, we have
    \[
    \lambda^c(x) = \lambda^c(\mu) = \int \log\|Df(x)|_{E^c}\| \, d\mu > \frac{c_0}{2}
    \]
    for \(\mu\)-almost every \(x\). By Birkhoff's ergodic theorem, this is equivalent to
    \[
    \lim_{n \to \infty} \frac{1}{n} \sum_{j=0}^{n-1} \log \|Df(f^j(x))^{-1}\| \leq 3 \cdot \left( -\frac{c_0}{6} \right) < 0.
    \]
    because the central direction \(E^c\) is one-dimensional.
    
    This fact, combined with \cref{slow_rec}, gives us the assumptions needed for \(H(\sigma)\) at \(\mu\)-almost every point.
\end{proof}

\subsubsection{Expansivity inside \(\mathcal{K}\)}
Finally, we conclude the proof that any \(\sigma^{-1}\)-expanding measure \(\mu\) is almost expansive at scale \(\delta_1\) which implies every \(\mu\in\mathcal{K}\) is almost expansive.

\begin{lemma}\label{expansivity}
    For any \(\sigma^{-1}\)-expanding measure \(\mu\), there exists \(\delta_1 > 0\) such that for any \(\epsilon < \delta_1\) and for \(\mu\)-almost every \(x\),
    \[
    \Gamma^+_x(\epsilon) = \{x\}.
    \]
\end{lemma}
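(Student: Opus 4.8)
The plan is to exploit the abundance of hyperbolic times for points in $H(\sigma)$ and the contraction estimate of \cref{hyp_times_prop}. Let $\sigma < 1$ be the constant from the preceding lemma and let $\delta > 0$, $\nu > 0$ be the constants furnished by \cref{infty_hyp_time} for this $\sigma$ and $f$; let $\delta_1 > 0$ be the radius supplied by \cref{hyp_times_prop} for this pair $(\sigma,\delta)$. I claim this $\delta_1$ works. Fix any $\epsilon < \delta_1$ and a point $x$ in the full-measure set $H(\sigma)$ (recall $\mu(H(\sigma)) = 1$ by hypothesis). Suppose $y \in \Gamma^+_\epsilon(x)$, i.e. $\dist(f^k x, f^k y) < \epsilon < \delta_1$ for every $k \ge 0$; I must show $y = x$.

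First I would show that $y$ lies in every "hyperbolic neighborhood" $V_x$ associated to a hyperbolic time of $x$. By \cref{infty_hyp_time}, since $x \in H(\sigma)$, there is an infinite sequence of $(\sigma,\delta)$-hyperbolic times $n_1 < n_2 < \cdots$ for $x$ (taking $N \to \infty$ along the sequence produces arbitrarily large hyperbolic times, because $l \ge \nu N \to \infty$). For each such $n = n_i$, \cref{hyp_times_prop} gives a neighborhood $V_x$ on which $f^n$ is a diffeomorphism onto $B_{\delta_1}(f^n(x))$. Since $\dist(f^n x, f^n y) < \delta_1$, the point $f^n(y)$ lies in $B_{\delta_1}(f^n(x))$, so there is a unique $y' \in V_x$ with $f^n(y') = f^n(y)$; I then need the iterates of $y'$ and $y$ to agree for $0 \le k \le n$ so that $y' = y$. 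This is where the orbit-closeness of $y$ is used: the contraction estimate $\dist(f^{n-k}(y'), f^{n-k}(z)) \le \sigma^{k/2}\dist(f^n(y'), f^n(z))$ controls points \emph{inside} $V_x$, so the delicate point is to verify that $y$ itself is forced into $V_x$; one argues that if $f^n(y) \in B_{\delta_1}(f^n x)$ and the whole backward orbit segment of $f^n(y)$ stays $\epsilon$-close to that of $f^n(x)$, then by the injectivity of $f^n|_{V_x}$ and a continuity/connectedness argument $y \in V_x$, hence $y = y'$.

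Granting $y \in V_{x}$ for the hyperbolic time $n = n_i$, apply the second bullet of \cref{hyp_times_prop} with $z = x$ and $k = n$: for every $0 \le m \le n$,
\[
\dist(f^{m}(y), f^{m}(x)) \;\le\; \sigma^{(n-m)/2}\,\dist(f^{n}(y), f^{n}(x)) \;\le\; \sigma^{(n-m)/2}\,\delta_1.
\]
In particular, taking $m = 0$ gives $\dist(y,x) \le \sigma^{n/2}\delta_1$. Letting $n = n_i \to \infty$ and using $\sigma < 1$ forces $\dist(y,x) = 0$, i.e. $y = x$. Hence $\Gamma^+_\epsilon(x) = \{x\}$ for $\mu$-a.e.\ $x$, as claimed.

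\textbf{Main obstacle.} The contraction estimate in \cref{hyp_times_prop} applies only to pairs of points already known to lie in $V_x$, whereas a priori we only know that $y$ shadows the orbit of $x$ in the ambient metric. The crux of the argument is therefore the topological step: showing that an orbit which stays $\epsilon$-close to a point with infinitely many hyperbolic times must actually enter each corresponding hyperbolic neighborhood $V_x$ (equivalently, that the "inverse branch" of $f^{n}$ through $B_{\delta_1}(f^{n}x)$ that defines $V_x$ is the one picked out by the shadowing orbit). This is handled by choosing $\epsilon < \delta_1$ and using that $f^{n}$ maps $V_x$ diffeomorphically onto the \emph{full} ball $B_{\delta_1}(f^n x)$ together with the fact that the shadowing constraint pins down a consistent choice of preimage at every intermediate time; once this is in place the geometric contraction does the rest. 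I would also take care that $\delta_1$ can be chosen uniformly in $x$ (it depends only on $\sigma$ and $f$ by \cref{hyp_times_prop}), which is what makes the "same scale" conclusion — needed for the uniform almost-expansivity across all $\mu \in \mathcal{K}$ — legitimate.
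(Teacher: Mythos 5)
Your proposal is correct and follows essentially the same route as the paper: infinitely many \((\sigma,\delta)\)-hyperbolic times from \cref{infty_hyp_time} combined with the backward contraction of \cref{hyp_times_prop} yield \(d(x,y)\le \sigma^{n_i/2}\epsilon\to 0\), with \(\delta_1\) uniform in \(x\). You are in fact more careful than the paper's own proof, which applies the contraction estimate directly to \(y\in\Gamma^+_x(\epsilon)\) without addressing the point you flag as the main obstacle, namely that \(y\) must first be shown to lie in the hyperbolic neighborhood \(V_x\).
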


\begin{proof}
    \cref{infty_hyp_time} guarantees that for \(\mu\)-almost every \(x\), there are infinitely many hyperbolic times \(n_i(x)\), since \(\mu \in \mathcal{K}\). Therefore, applying \cref{hyp_times_prop}, we conclude that there exists some \(\delta_1 > 0\) such that if \(y \in \Gamma^+_x(\epsilon)\) with \(\epsilon < \delta_1\), then for any \(n_i\), we have
    \[
    \text{dist}(x, y) \leq \sigma^{n_i/2} \, \text{dist}(f^{n_i}(x), f^{n_i}(y)) \leq \sigma^{n_i/2} \epsilon.
    \]
    Since \(n_i \to \infty\) as \(i \to \infty\), we deduce that \(x = y\).
\end{proof}

\subsection{Pressure gap}
\begin{theorem}\label{expansivity-gap}
We have the pressure gap 
\[
P_{\mathrm{exp}}^{\perp}(\varphi) \;<\; P_{\mathrm{top}}(\varphi).
\]
\end{theorem}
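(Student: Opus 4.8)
The plan is to combine the two facts just established in this section: every $\mu\in\mathcal{K}$ is almost expansive at one \emph{common} scale $\delta_1$, and every ergodic measure outside $\mathcal{K}$ has pressure bounded away from $P_{\mathrm{top}}(\varphi)$ by a \emph{uniform} amount. Together these say that the only ergodic measures that can charge the set of non-expansive points at a small scale are the low-pressure ones, and those miss $P_{\mathrm{top}}(\varphi)$ by a definite gap; taking the supremum and then letting $\epsilon\to 0$ gives the claim.

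First I would extract a quantitative version of \cref{low_pressure_outside}. Because the standing hypothesis on the potential is the \emph{strict} inequality $\sup\varphi-\inf\varphi<\frac{c_0}{2}-\log\frac{d+\epsilon}{d-\epsilon}$, there is a fixed $\delta_0>0$ with $\sup\varphi-\inf\varphi\le\frac{c_0}{2}-\log\frac{d+\epsilon}{d-\epsilon}-\delta_0$. Rerunning the chain of inequalities in the proof of \cref{low_pressure_outside} with this slack, every ergodic $\eta\notin\mathcal{K}$ satisfies
\[
h_\eta(f)+\int\varphi\,d\eta\ \le\ \log(d+\epsilon)+\tfrac{c_0}{2}+\int\varphi\,d\eta\ \le\ P_{\mathrm{top}}(\varphi)-\delta_0,
\]
where the first step is Ruelle's inequality together with \cref{unstable_Lyap_exp} and $\lambda^c(\eta)\le\frac{c_0}{2}$, and the second step inserts $\int\varphi\,d\mu_{\mathrm{SRB}}$, uses $\int\varphi\,d\eta-\int\varphi\,d\mu_{\mathrm{SRB}}\le\sup\varphi-\inf\varphi\le\frac{c_0}{2}-\log\frac{d+\epsilon}{d-\epsilon}-\delta_0$, the bound $h_{\mu_{\mathrm{SRB}}}(f)\ge\log(d-\epsilon)+c_0$, and the variational principle. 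The point is that $\delta_0$ is one constant, not depending on $\eta$.

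Next, fix any scale $\epsilon<\delta_1$, where $\delta_1$ is the almost-expansivity scale from \cref{expansivity}; this scale is uniform over $\mathcal{K}$ because \cref{unstable_Lyap_exp} and the lemma identifying the expansion rate provide the single value $\sigma=\exp(-c_0/6)$ for every $\mu\in\mathcal{K}$, and the $\delta_1$ produced via \cref{infty_hyp_time} and \cref{hyp_times_prop} depends only on $\sigma$ and $f$. By \cref{expansivity}, every $\mu\in\mathcal{K}$ has $\mu(\mathrm{NE}(\epsilon))=0$. Contrapositively, if $\mu\in\mathcal{M}_e(f)$ satisfies $\mu(\mathrm{NE}(\epsilon))>0$, then $\mu\notin\mathcal{K}$, so the displayed estimate gives $h_\mu(f)+\int\varphi\,d\mu\le P_{\mathrm{top}}(\varphi)-\delta_0$. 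Taking the supremum over all such $\mu$ yields $P^{\perp}_{\mathrm{exp}^+}(\varphi,\epsilon)\le P_{\mathrm{top}}(\varphi)-\delta_0$ for every $\epsilon<\delta_1$; since $P^{\perp}_{\mathrm{exp}^+}(\varphi)\le P^{\perp}_{\mathrm{exp}^+}(\varphi,\epsilon)$ for any fixed $\epsilon$, we conclude $P^{\perp}_{\mathrm{exp}^+}(\varphi)\le P_{\mathrm{top}}(\varphi)-\delta_0<P_{\mathrm{top}}(\varphi)$.

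The only real subtlety — and the step I would double-check most carefully — is uniformity: that the almost-expansivity scale $\delta_1$ is independent of the measure in $\mathcal{K}$, and that the pressure deficit $\delta_0$ outside $\mathcal{K}$ is a single constant. The former is exactly why the expansion-rate lemma was stated with one $\sigma$ for all of $\mathcal{K}$; the latter is exactly what the strictness of the oscillation hypothesis buys. Once both are in place, the pressure gap is immediate.
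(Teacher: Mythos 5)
Your proof follows the same two-case strategy as the paper's: measures in $\mathcal{K}$ do not charge $\mathrm{NE}(\epsilon)$ for $\epsilon<\delta_1$ by \cref{expansivity}, while measures outside $\mathcal{K}$ have pressure below $P_{\mathrm{top}}(\varphi)$ by \cref{low_pressure_outside}, so the approach is essentially identical. Your explicit extraction of a uniform deficit $\delta_0$ from the strictness of the oscillation hypothesis is in fact a welcome refinement: the paper passes directly from ``each contributing measure has pressure below $P_{\mathrm{top}}(\varphi)$'' to ``the supremum is below $P_{\mathrm{top}}(\varphi)$,'' a step that requires exactly the uniformity you supply.
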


\begin{proof}
By definition, for any \(\epsilon < \delta_1\),
\[
P_{\mathrm{exp}}^{\perp}(\varphi,\epsilon)
\;=\;
\sup_{\mu \in \mathcal{M}_e(f)}
\Bigl\{
h_\mu(f) \;+\; \int \varphi \, d\mu
\;:\;
\mu\bigl(\mathrm{NE}(\epsilon)\bigr) > 0
\Bigr\}.
\]
Consider two cases for a measure \(\mu\):

\smallskip
\noindent
\textbf{Case 1:} \(\mu \notin \mathcal{K}\). 
By \cref{low_pressure_outside}, 
\[
P_{\mu}(\varphi) 
\;<\; 
P_{\mathrm{top}}(\varphi).
\]
Hence, such measures cannot achieve a pressure at \(P_{\mathrm{top}}(\varphi)\).

\smallskip
\noindent
\textbf{Case 2:} \(\mu \in \mathcal{K}\).
By \cref{expansivity}, there exists \(\delta_1 > 0\) so that for all \(\epsilon < \delta_1\),
\[
\mu\bigl(\mathrm{NE}(\epsilon)\bigr) 
\;=\; 
0.
\]
Thus, for sufficiently small \(\epsilon\), these measures place no mass on \(\mathrm{NE}(\epsilon)\) and do not contribute to \(P_{\mathrm{exp}}^{\perp}(\varphi,\epsilon)\).

In summary, for small enough \(\epsilon\), every measure that contributes to \(P_{\mathrm{exp}}^{\perp}(\varphi,\epsilon)\) lies outside \(\mathcal{K}\), and each such measure has pressure below \(P_{\mathrm{top}}(\varphi)\). Consequently,
\[
P_{\mathrm{exp}}^{\perp}(\varphi)
\;=\;
\lim_{\epsilon\rightarrow 0} P_{\mathrm{exp}}^{\perp}(\varphi,\epsilon) 
\;<\; 
P_{\mathrm{top}}(\varphi).
\]
This completes the proof.
\end{proof}

\section{Non-uniform specification and regularity}\label{sec:spec}
In this section, we first give an explicit description of how to decompose the collection of all orbit segments \((x,n)\) into \((\mathcal{P}, \mathcal{G},\mathcal{S})\). We show that \(\mathcal{G}\) has the specification property, and that orbits not in \(\mathcal{G}\) carry small pressure. In addition, Any Hölder continuous \(\varphi\) has the Bowen property on \(\mathcal{G}\) at any scale \(\epsilon\).

\subsection{Construction of the orbit decomposition}
Let \(\psi^c(x) = \log \|Df|_{E^c(x)}\|\), so that \(\frac{1}{n} S_n \psi^c(x)\) represents the average expansion (or contraction) undergone by an orbit segment \((x, n)\) in the center direction. Additionally, given \(\mu\in \mathcal{M}_e(f)\), \(\lambda^c(\mu) =\int \psi^c\, d\mu\) is the center Lyapunov exponent of \(\mu\).

To define the orbit decomposition (\cref{fig}), for some small \(r>0\) we consider the following collection of orbit segments: 
\begin{equation}\label{orb-decomp}
\begin{split}
    \mathcal{P} &:=\emptyset, \\
    \mathcal{G} &:=\{(x,n)\in \Lambda\times \N:\,S_{k}\psi^c(f^{n-k}x)\geq kr,\  \forall\, 0< k\leq n\},\\
    \mathcal{S} &:=\{(x,n)\in \Lambda\times \N:\, S_n\psi^c(x)< nr\}.
\end{split}   
\end{equation}

The collection \(\mathcal{G}\) is chosen so that along orbit segments from \(\mathcal{G}\), the center-unstable manifolds are uniformly backward contracting. In addition, \(f\) is locally invertible along these ``good'' orbits. 

Together with the trivial collection \(\{(x,0) : x \in \Lambda\}\) for \(\mathcal{P}\), these collections define a decomposition of any orbit \((x, n) \in \Lambda \times \mathbb{N}\) as follows: let \(s\) be the largest integer in \(\{0, \cdots, n\}\) such that \( S_{s}\psi^c(f^{n-s}x) < sr\), meaning \((f^{n-s}x, s) \in \mathcal{S}\).

We must then have \((x,  n-s) \in \mathcal{G}\), since if \(S_{k_0}\psi^c(f^{n-s-k_0} x) < k_0r\) for some \(0 \leq k_0 \leq n-s\), then 
\[
S_{s+k_0}\psi^c(f^{n-s-k_0}x) = S_{k_0}\psi^c(f^{n-s-k_0}x) + S_{s}\psi^c(f^{n-s}x) < (s+k_0)r,
\]
which would contradict the maximality of \(s\).

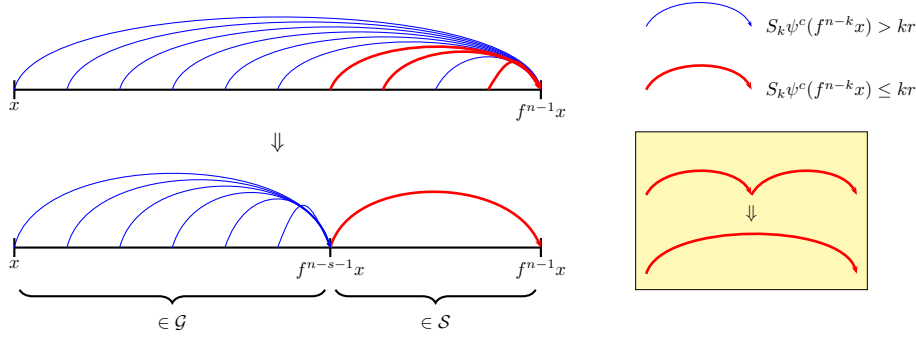
\begin{figure}[ht]
  \centering
  \begin{tikzpicture}[thick,scale=0.7, transform shape]  % add ,scale=.8,transform shape if desired
% --------------------------------------------------------------
% TOP orbit :  x ---------------→ f^n(x)
% --------------------------------------------------------------
\coordinate (x0) at (0,0);
\coordinate (xn) at (10,0);
\draw (x0) -- (xn);

% end ticks + labels
\foreach \p/\lab in {x0/$x$,xn/$f^{n-1}x$}{
  \draw (\p) ++(0,-.18) -- ++(0,.36);
  \node[below=3pt] at (\p) {\lab};
}

% blue “good’’ arcs (all k)
\def\flat{0.7}  % 0<\flat<1  → flatter;  1 = original bumpiness

\foreach \start [count=\k from 0] in {0,...,9}{%
  \pgfmathsetmacro{\h}{\flat*(1 + 0.18*(9-\k))}
  \draw[blue,->,line width=.4pt]
        (\start,0)
        .. controls +(.4,\h) and +(-.45,\h) ..
        (10,0);
}

% red “bad’’ arcs (subset)
\foreach \start in {9,7,6}{%
  \pgfmathsetmacro{\h}{\flat*(1 + 0.18*(9-\start))}  % identical height as blue arc
  \draw[red,->,line width=1pt]              % drawn AFTER the blue arcs
        (\start,0)
        .. controls +(.4,\h) and +(-.45,\h) ..
        (10,0);
}

% big double arrow
\node at ($(x0)!0.5!(xn)+(0,-1)$) {\Large$\Downarrow$};

% --------------------------------------------------------------
% LOWER orbit :  0 -- f^p(x) ----→ (good) ----→ t
% --------------------------------------------------------------
\coordinate (x1) at (0,-3);
\coordinate (xs) at (6,-3);
\coordinate (xn) at (10,-3);
\draw (x1 |- xs) -- (xn);

% ticks + labels
\foreach \p/\lab in {x1/$x$,xs/$f^{n-s-1}x$,xn/$f^{n-1}x$}{
  \draw (\p) ++(0,-.18) -- ++(0,.36);
  \node[below=3pt] at (\p) {\lab};
}

% red arc (bad prefix)
\draw[red,->,line width=1pt]
      (xs |- xn) .. controls +(.4,1.4) and +(-.6,1.4) .. (xn);

% blue family (good core)
% choose once:
\def\flat{0.9}  % 0<\flat<1  → flatter;  1 = original bumpiness

\foreach \start in {0,1,2,3,4,5}{%
  \pgfmathsetmacro{\h}{\flat*(1+0.18*(6-\start)}    
  \draw[blue,->,line width=.4pt]
        (\start,-3) 
        .. controls +(.4,\h) and +(-.45,\h) ..
        (6,-3);
}

% braces + set labels
\draw[decorate,decoration={brace,mirror,amplitude=5pt}]
      ($(x1 |- xs)+(0.1,-0.8)$) -- ($(xs)+(-0.1,-0.8)$)
      node[midway,below=10pt] {$\in \mathcal{G}$};

\draw[decorate,decoration={brace,mirror,amplitude=5pt}]
      ($(xs)+(0.1,-0.8)$) -- ($(xn)+(-0.1,-0.8)$)
      node[midway,below=10pt] {$\in \mathcal{S}$};

% --------------------------------------------------------------
% RIGHT-HAND legend
% --------------------------------------------------------------
% exemplar blue and red arcs with inequalities
\coordinate (legB0) at (12,1.2);
\coordinate (legB1) at (14,1.2);
\draw[blue,->, line width=0.4pt] (legB0) .. controls +(0.3,.6) and +(-0.3,.6) .. (legB1);
\node[right=4pt] at (legB1) {$S_{k}\psi^c(f^{n-k}x)> kr$};

\coordinate (legR0) at (12,0);
\coordinate (legR1) at (14,0);
\draw[red,->,line width=1pt] (legR0) .. controls +(0.3,.6) and +(-.3,.6) .. (legR1);
\node[right=4pt] at (legR1) {$S_{k}\psi^c(f^{n-k}x)\le kr$};

% yellow “collapsed’’ picture
\begin{scope}[shift={(12,-2)}]
  \fill[yellow!35] (-0.2,-1.8) rectangle (4.2,1.2);
  \draw[black, line width=.2pt] (-0.2,-1.8) rectangle (4.2,1.2);

  % two red hops
  \foreach \x/\h in {0/1.0,2/1.0}{
    \draw[red,->,line width=1pt]
          (\x,0) .. controls +(0.3,.6) and +(-0.3,.6) .. (\x+2,0);
  }

  % downward arrow
  \node at (2,-0.35) {$\Downarrow$};

  % long red hop
  \draw[red,->,line width=1pt]
        (0,-1.5) .. controls +(0.3,1) and +(-0.3,1) .. (4,-1.5);
\end{scope}

% --------------------------------------------------------------
\end{tikzpicture}
  \caption{Orbit decomposition}
  \label{fig}
\end{figure}

\subsection{Specification on \texorpdfstring{$\mathcal{G}$}{G}}
In the uniformly hyperbolic setting, a key step in establishing specification on a mixing, locally maximal hyperbolic set \(\Lambda\) is that for any \(\delta > 0\), there exists \(N \in \mathbb{N}\) such that for all \(x, y \in \Lambda\) and \(n \ge N\),
\[
f^n\bigl(W_\delta^u(x)\bigr) \;\cap\; W_\delta^s(y) \;\neq\; \varnothing.
\]
For Viana maps, we adapt this idea by replacing the local stable manifold with the point \(y\) itself, and the local unstable manifold with the center-unstable manifold \(W_\delta^{cu}(x)\). Indeed, Viana maps can be viewed as partially hyperbolic systems with one-dimensional unstable and center directions. Because each center-unstable manifold is merely a two-dimensional disk, and every such disk is eventually mapped onto \(\Lambda\) by the strong mixing property, the same construction effectively applies.

To prove specification on \(\mathcal{G}\), we begin by establishing that every ``good'' orbit in \(\mathcal{G}\) exhibits backward uniform contraction:
\begin{lemma}\label{back_contract}
    Fix \(\epsilon>0\). For every \((x,n)\in \mathcal{G}\) and \(y \in B_n(x,\epsilon)\), we have
    \[
    d\bigl(f^k x, f^k y\bigr) 
    \;\le\; 
    e^{-r(n-k)}\, d\bigl(f^n x, f^n y\bigr)
    \;\le\; 
    e^{-r(n-k)}\,\epsilon
    \quad
    \text{for all } 0 \le k \le n.
    \]
\end{lemma}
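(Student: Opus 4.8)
The plan is to establish the bound by backward induction on $k$, propagating control from time $n$ down to time $0$, using the defining inequality of $\mathcal{G}$ together with the estimate $\tfrac13\|Df(z)|_{E^c}\|\le \operatorname{dist}(z,\mathcal{C})\le 3\|Df(z)|_{E^c}\|$ and the fact that for $(x,n)\in\mathcal{G}$ the map $f$ is locally invertible along the orbit. The key point is that the condition $S_k\psi^c(f^{n-k}x)\ge kr$ for all $0<k\le n$ says precisely that the product of the center-derivatives along the terminal length-$k$ piece of the orbit is at least $e^{kr}$; dually, iterating $f^{-1}$ (the appropriate inverse branch, which exists along a $\mathcal G$-orbit) along that piece contracts at average rate $e^{-r}$. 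So if two points $f^nx,f^ny$ are $\epsilon$-close, pulling back should contract their distance by roughly $e^{-rk}$ after $k$ steps.

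To make this precise I would first fix $\epsilon$ small enough that the relevant inverse branches are well-defined and that the center-unstable disks through nearby points are graphs over a fixed size, invoking the partial hyperbolicity of $f$ (continuous $E^u\oplus E^c$ splitting) and \cref{unstable_Lyap_exp}. Along a $\mathcal G$-orbit the strong-unstable direction is uniformly expanded and the center direction is expanded at rate at least $e^r$ in the relevant sense, so the inverse branch $f^{-1}$ restricted to the $2$-disk $B_n(x,\epsilon)$ contracts the Bowen-relevant distance. The induction step is: assuming $d(f^{k}x,f^{k}y)\le e^{-r(n-k)}d(f^nx,f^ny)$, apply the local inverse of $f$ at $f^{k-1}x$, whose derivative has norm controlled by $\|Df(f^{k-1}x)|_{E^c}\|^{-1}$ in the center direction (up to the uniform contraction in the unstable direction, which only helps), together with bounded distortion; since $y\in B_n(x,\epsilon)$ keeps $f^{k-1}y$ in the domain of this branch, we get $d(f^{k-1}x,f^{k-1}y)\le e^{-r}d(f^kx,f^ky)$, closing the induction. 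The second inequality in the statement is then immediate from $d(f^nx,f^ny)\le\epsilon$.

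The main obstacle I expect is the bookkeeping needed to turn the one-step derivative bound into a genuine distance bound, i.e., controlling distortion along the orbit so that the infinitesimal contraction rate $e^{-r}$ transfers to actual distances without an accumulating multiplicative error. This is exactly where one needs $\epsilon$ small and the slow-recurrence/hyperbolicity built into $\mathcal{G}$: because $S_k\psi^c(f^{n-k}x)\ge kr>0$ forces the terminal orbit piece to stay, on average, away from $\mathcal{C}$, the distortion estimates from \cref{sec:hyperbolic-times} (the bounds on $|\log\|Df(x)^{-1}\|-\log\|Df(y)^{-1}\||$ in terms of $\operatorname{dist}(x,y)/\operatorname{dist}(x,\mathcal{C})^l$) are summable, yielding a uniform distortion constant. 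A clean way to package this is to note that $(x,n)\in\mathcal G$ behaves like a hyperbolic time of order $n$ in the sense of \cref{hyp_times_prop}, so that $f^n$ maps a neighborhood of $x$ diffeomorphically onto a ball of fixed radius with the geometric contraction $d(f^{n-k}y,f^{n-k}z)\le\sigma^{k/2}d(f^ny,f^nz)$; one then only has to check that the rate $r$ chosen in \eqref{orb-decomp} is compatible with the $\sigma$ coming from that proposition, absorbing constants into the choice of $\epsilon$.
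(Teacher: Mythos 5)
Your overall strategy --- pulling the estimate back from time $n$ using the expansion encoded in the definition of $\mathcal{G}$ --- is the same as the paper's, but two specific steps in your write-up do not go through as stated. First, your induction step asserts a per-iterate contraction $d(f^{k-1}x,f^{k-1}y)\le e^{-r}\,d(f^kx,f^ky)$ coming from $\|Df(f^{k-1}x)|_{E^c}\|^{-1}$. The definition of $\mathcal{G}$ in \eqref{orb-decomp} only bounds the \emph{terminal Birkhoff sums}, $S_k\psi^c(f^{n-k}x)\ge kr$; it does not give $\psi^c(f^jx)\ge r$ for each individual $j$. Subtracting consecutive conditions yields only $\psi^c(f^{n-k}x)\ge kr-(k-1)\sup\psi^c$, so a good segment may pass arbitrarily close to the critical line at intermediate times, where $\|Df|_{E^c}\|$ is near $0$ and the one-step inverse branch \emph{expands}. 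The estimate must therefore be run on the whole terminal block at once, via $\|Df^{n-k}|_{E^c(f^kx)}\|=\prod_{j=k}^{n-1}\|Df|_{E^c(f^jx)}\|\ge e^{r(n-k)}$, which is the cumulative inverse-derivative bound the paper invokes directly.

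Second, your proposed repair of the distortion problem --- treating $(x,n)\in\mathcal{G}$ as a hyperbolic time of order $n$ so as to invoke \cref{hyp_times_prop} --- is not available: \cref{hyp_times} requires, in addition to the derivative-product condition, the pointwise slow-recurrence condition $dist_\delta(f^{n-k}(x),\mathcal{C})\ge\sigma^{bk}$, and membership in $\mathcal{G}$ gives no control of this kind (by the same observation as above). Consequently the summability of the distortion terms $B\,dist(x,y)/dist(x,\mathcal{C})^l$ cannot simply be read off from \cref{hyp_times_prop}. That said, the concern you raise is legitimate: converting a derivative bound along the orbit of $x$ into a distance bound for the pair $(x,y)$ does require a mean-value or bounded-distortion argument, and the paper's own two-line proof passes over this point in silence. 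So your diagnosis of where the difficulty lies is sound, but the two tools you propose to resolve it (per-step contraction, and \cref{hyp_times_prop}) are precisely the ones that the definition of $\mathcal{G}$ does not supply.
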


\begin{proof}
Since \(y \in B_n(x,\epsilon)\), the points \(f^j(x)\) and \(f^j(y)\) remain within \(\epsilon\)-distance for all \(0 \le j \le n\). In particular, 
\[
d\bigl(f^n x,\, f^n y\bigr) 
\;\le\;
\epsilon.
\]
To relate this distance at time \(n\) back to time \(k\), we observe that \(\|Df^{-(n-k)}(x)\| \le e^{-r(n-k)}\). It follows that
\[
d\bigl(f^k x,\, f^k y\bigr)
\;\le\;
e^{-r(n-k)}\, d\bigl(f^n x,\, f^n y\bigr)
\;\le\;
e^{-r(n-k)}\,\epsilon,
\]
as claimed.
\end{proof}

\begin{theorem}\label{specification}  
    The collection \(\mathcal{G}\) has specification at any scale \(\epsilon>0\).
\end{theorem}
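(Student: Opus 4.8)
The plan is to establish $(W)$-specification for $\mathcal{G}$ at a fixed scale $\epsilon>0$ by gluing finitely many good orbit segments one at a time, using the center-unstable disks as a substitute for local unstable manifolds and the strong topological mixing property as a substitute for the hyperbolic "closing" between unstable and stable leaves. First I would fix $\epsilon>0$ and recall from \cref{hyp_times_prop} and the construction of $\mathcal{G}$ (via \cref{back_contract}) that for each good segment $(x,n)\in\mathcal{G}$ there is a center-unstable disk $D_n(x)$ through $x$ on which $f^n$ acts as a diffeomorphism onto a ball of a definite radius $\delta_1>0$, with uniform backward contraction at rate $e^{-r}$; shrinking $\epsilon$ if necessary we may assume $\epsilon<\delta_1$ and that any point of $D_n(x)$ within $\delta_1$ of $x$ in the intrinsic metric stays $\epsilon$-close to $x$ along the first $n$ iterates (this is exactly the content of \cref{back_contract} combined with the bounded-geometry of the disks).

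Next I would pin down the transition time $\tau=\tau(\epsilon)$. By strong topological mixing (every center-unstable disk is mapped onto all of $\Lambda$ in finitely many steps, and more precisely $f^N(O)=\Lambda$ for $N=N(O)$ on open sets), together with the uniform size $\delta_1$ of the image disks $f^{n_j}(D_{n_j}(x_j))=B_{\delta_1}(f^{n_j}(x_j))$, there is a single $\tau\in\N$ such that for every $z\in\Lambda$ and every ball $B_{\delta_1}(w)$ one has $f^{\tau}\big(B_{\delta_1}(w)\cap\Lambda\big)=\Lambda\ni z$; the point is that the relevant open sets all contain a ball of the fixed radius $\delta_1$, so a uniform $N$ works. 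I would phrase this carefully as: there exists $\tau$ so that for all $w,z\in\Lambda$, $f^{\tau}(B_{\delta_1}(w))\ni z$, which is what lets the gluing times be uniformly bounded (indeed here constant, but $\le\tau$ suffices for $(W)$-specification).

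The main construction is then an inductive pullback. Given $\{(x_j,n_j)\}_{j=1}^k\subset\mathcal{G}$, I build the shadowing point $y$ from the last segment backward. Start with $y_k=x_k$. Having chosen $y_{j+1}$, I need a point $y_j\in D_{n_j}(x_j)$ with $d_{n_j}(y_j,x_j)<\epsilon$ and $f^{n_j+\tau}(y_j)=y_{j+1}$: first use mixing to find a point in $f^{n_j}(D_{n_j}(x_j))=B_{\delta_1}(f^{n_j}(x_j))$ whose $\tau$-th image is $y_{j+1}$, then pull it back through the diffeomorphism $f^{n_j}|_{D_{n_j}(x_j)}$ to get $y_j\in D_{n_j}(x_j)$; the backward contraction (\cref{back_contract}/\cref{hyp_times_prop}) guarantees $y_j$ lies within the small intrinsic neighborhood of $x_j$, hence $d_{n_j}(f^{0}(y_j),x_j)<\epsilon$. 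Setting $y:=y_1$ and $\tau_i:=\tau$ for all $i$, with $t_j=\sum_{i<j}(n_i+\tau)$, I claim $d_{n_j}(f^{t_j}(y),x_j)<\epsilon$ for each $j$: by construction $f^{t_j}(y)=y_j$, and $d_{n_j}(y_j,x_j)<\epsilon$ was arranged. The point $y$ may a priori lie only in the ambient manifold, not in $\Lambda$; since $\Lambda=f^2(S^1\times I)$ is the attractor and the whole construction uses disks and images inside $\Lambda$, $y_1\in\Lambda$ automatically, or else one applies $f^2$ and absorbs it into the first gap.

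The step I expect to be the genuine obstacle is the \emph{uniformity} of $\tau$ in the mixing step: a priori the number of iterates needed for $f^N(O)=\Lambda$ depends on $O$, so I must exploit that every disk I feed in has image exactly $B_{\delta_1}(\cdot)$ of the \emph{fixed} radius $\delta_1$ from \cref{hyp_times_prop}, reducing the problem to a compactness/continuity argument over the finitely-covered set of centers $w\in\Lambda$ — this is where the uniform choice of $\sigma$ (hence of $\delta_1$) across all of $\mathcal{K}$ established earlier pays off. A secondary technical point is verifying that pulling back through $f^{n_j}|_{D_{n_j}(x_j)}$ keeps $y_j$ close enough to $x_j$ in the Bowen metric $d_{n_j}$ and not merely at the final time; this is handled by the second bullet of \cref{hyp_times_prop}, which gives contraction at \emph{every} intermediate time $1\le m\le n_j$, so closeness at time $n_j$ propagates backward to closeness at all times.
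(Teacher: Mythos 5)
Your proposal is correct and follows essentially the same route as the paper: a uniform mixing time for balls of a definite radius, combined with the backward contraction of \cref{back_contract} along good segments, assembled by pulling preimages back through the diffeomorphisms \(f^{n_j}|_{B_{n_j}(x_j,\epsilon)}\). The only differences are cosmetic — the paper phrases the pullback as a nested sequence of sets \(U_k\subset\cdots\subset U_1\) rather than a backward induction on points, and it works directly at scale \(\epsilon\) using only \cref{back_contract}, without routing through the hyperbolic-times radius \(\delta_1\) of \cref{hyp_times_prop}, which is not needed here.
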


\begin{proof}
We begin by observing that for any \(\epsilon >0 \), there exists some \(\tau = \tau(\epsilon) \in \mathbb{N}\) such that for any open disk \(B(x,\epsilon)\), we have \(f^j(B(x, \epsilon)) = \Lambda\) for some \(j \leq \tau\). This follows from the compactness and the topological mixing property of \(f\).

Now, consider any finite set of orbit segments \((x_1, n_1), \cdots, (x_k, n_k)\) from \(\mathcal G\). For any \(i = 1, \ldots, k-1\), we can select \(\tau_i \leq \tau(\epsilon)\) such that the image of \(B(f^{n_i} x_i, \epsilon)\) under \(f^{\tau_i}\) covers \(B_{n_{i+1}}(x_{i+1}, \epsilon)\).

Since \((x_i, n_i) \in \mathcal{G}\), by \cref{back_contract}, we have backward uniform contraction along this ``good'' orbit: for all \(0 \leq m \leq n_i\)
\[d\bigl(f^m x_i, f^m y\bigr) 
    \;\le\; 
    e^{-r(n_i-m)}\, d\bigl(f^{n_i} x_i, f^{n_i} y\bigr)
    \;\le\; 
    e^{-r(n_i-m)}\,\epsilon.\]
On a two-dimensional manifold this property is enough to guarantee that \(f^{n_{i}}\) maps \(B_{n_i}(x_i, \epsilon)\) diffeomorphically onto \(B(f^{n_{i}} x_i, \epsilon)\). As a consequence of these, if we define 
\[
t_1=0 \quad \text{and} \quad  t_j=\sum_{i=1}^{j-1} (n_i + \tau_i)  \quad \text{ for } j=2, \cdots, k
\]
and let \(U_j\) be such that 
\[
f^{t_{j}} U_j = B_{n_{j}}(x_{j}, \epsilon),
\]
then we have a nested sequence of sets:
\[
U_k \subset \cdots 
\subset U_{j+1} \subset U_j \subset \cdots \subset U_1 
\]
which means
\[
\bigcap_{j=1}^{k} U_j \neq \emptyset.
\]

To complete the proof of the theorem, choose any \(y \in \bigcap_{j=1}^{k} U_j\), then \(y\) satisfies 
\[
d_{n_j}\left(f^{t_{j}}(y),x_j\right)<\epsilon \quad \text{for every }1\le j\le k
\]
\end{proof}

\subsection{Pressure gap}
We prove that if \(0<r\leq \frac{c_0}{2}\), then for any bounded potential \(\varphi\), the ``bad orbits'' carry less topological pressure than the whole space. Recall we define \(\psi^c(x) = \log \|Df|_{E^c(x)}\|\) and
    \[
    \mathcal{S} \;:=\; 
    \bigl\{ (x, n) \in \Lambda \times \mathbb{N} : S_n \psi^c(x) \leq nr \bigr\}.
    \]
We have the following pressure gap estimate:
\begin{theorem}\label{specification-gap}
    \( 
    P(\mathcal{P} \cup \mathcal{S}, \varphi)< P_{\mathrm{top}}(\varphi).
    \)
\end{theorem}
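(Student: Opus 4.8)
The plan is to bound the pressure $P(\mathcal{P}\cup\mathcal{S},\varphi)$ by exhibiting, for each relevant separated set, a measure-theoretic witness and then invoking the variational principle in reverse. Since $\mathcal{P}=\{(x,0):x\in\Lambda\}$ contributes nothing, it suffices to control $P(\mathcal{S},\varphi)$. The key structural fact about $\mathcal{S}$ is that $(x,n)\in\mathcal{S}$ forces $\frac1n S_n\psi^c(x)\le r\le \frac{c_0}{2}$, i.e. the $n$-step orbit segment has small average central expansion. First I would recall the standard variational characterization of the pressure of a collection of orbit segments: one has
\[
P(\mathcal{S},\varphi)\;\le\;\sup\Bigl\{h_\nu(f)+\int\varphi\,d\nu:\ \nu\in\mathcal{M}(f),\ \nu \text{ is a weak* limit of empirical measures along }\mathcal{S}\Bigr\}.
\]
This is the inequality coming from the fact that an $(n,\epsilon)$-separated subset $E\subset\mathcal{S}_n$ with nearly maximal weighted sum $\sum_{x\in E}e^{S_n\varphi(x)}$ can be converted, in the usual Misiurewicz fashion, into an invariant measure $\nu$ with $h_\nu(f)+\int\varphi\,d\nu\ge P(\mathcal{S},\varphi,\epsilon;f)$, and any such $\nu$ inherits the constraint $\int\psi^c\,d\nu\le r$ because it is a limit of averages $\frac1n\sum\delta_{f^j x}$ along segments satisfying $S_n\psi^c(x)\le nr$ (the function $\psi^c$ is continuous, so the inequality passes to the limit).

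Next I would push that constraint through to the conclusion. Any ergodic component $\eta$ of such a limit measure $\nu$ has central Lyapunov exponent $\lambda^c(\eta)=\int\psi^c\,d\eta$, and since $\int\psi^c\,d\nu\le r\le\frac{c_0}{2}$, at least we know $\nu$ itself is "non-expanding on average"; more carefully, by ergodic decomposition and affineness of $h_\nu(f)+\int\varphi\,d\nu$, it is enough to bound $h_\eta(f)+\int\varphi\,d\eta$ for ergodic $\eta$ with $\int\psi^c\,d\eta\le r$. But such $\eta$ satisfies $\lambda^c(\eta)\le r\le\frac{c_0}{2}$, so $\eta\notin\mathcal{K}$, and \cref{low_pressure_outside} applies verbatim: $h_\eta(f)+\int\varphi\,d\eta<P_{\mathrm{top}}(\varphi)$. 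The final issue is uniformity — \cref{low_pressure_outside} gives a strict inequality for each $\eta$, but I need a gap bounded away from zero. This is handled by making the estimate in the proof of \cref{low_pressure_outside} quantitative: for ergodic $\eta$ with $\lambda^c(\eta)\le r$, Ruelle's inequality and Pesin's formula give
\[
h_\eta(f)+\int\varphi\,d\eta\;\le\;\log(d+\epsilon)+r+\int\varphi\,d\eta\;\le\;P_{\mathrm{top}}(\varphi)-\Bigl(\tfrac{c_0}{2}-r\Bigr)-\Bigl(\tfrac{c_0}{2}-\log\tfrac{d+\epsilon}{d-\epsilon}-(\sup\varphi-\inf\varphi)\Bigr),
\]
using $h_{\mu_{\mathrm{SRB}}}(f)+\int\varphi\,d\mu_{\mathrm{SRB}}=P_{\mathrm{top}}(\varphi)$ together with $h_{\mu_{\mathrm{SRB}}}(f)\ge\log(d-\epsilon)+c_0$ and $\int\varphi\,d\eta-\int\varphi\,d\mu_{\mathrm{SRB}}\le\sup\varphi-\inf\varphi$. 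Both parenthesized quantities are strictly positive: the first because $r<\frac{c_0}{2}$ (indeed $0<r\le\frac{c_0}{2}$, and one simply takes $r$ strictly below $\frac{c_0}{2}$, or notes the hypothesis $\sup\varphi-\inf\varphi<\frac{c_0}{2}-\log\frac{d+\epsilon}{d-\epsilon}$ already forces room), the second by the standing oscillation hypothesis on $\varphi$. This yields a uniform gap $\delta_0>0$ with $h_\eta(f)+\int\varphi\,d\eta\le P_{\mathrm{top}}(\varphi)-\delta_0$ for every such $\eta$, hence for $\nu$, hence $P(\mathcal{P}\cup\mathcal{S},\varphi)\le P_{\mathrm{top}}(\varphi)-\delta_0<P_{\mathrm{top}}(\varphi)$.

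The main obstacle I anticipate is the first step — justifying the passage from the separated-set partition sums defining $P(\mathcal{S},\varphi,\epsilon;f)$ to an invariant measure $\nu$ satisfying the central-expansion constraint, uniformly in the scale $\epsilon$, and then taking $\epsilon\to0$. The Misiurewicz construction is standard for the full space, but here one must check that the upper semicontinuity of entropy (or the usual weak* partition-function estimate) interacts correctly with the side constraint $\frac1n S_n\psi^c\le r$ along the chosen segments, and that nothing is lost when the limit measure fails to be ergodic; the ergodic decomposition argument above is what resolves the latter. Once the variational bound on $P(\mathcal{S},\varphi)$ is in hand, the remainder is just the quantitative rerun of \cref{low_pressure_outside}, which is routine.
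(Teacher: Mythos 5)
Your proposal follows essentially the same route as the paper's proof: take $\mathcal{P}=\emptyset$, convert $(n,\epsilon)$-separated subsets of $\mathcal{S}_n$ into an invariant limit measure via the Misiurewicz construction, note that the constraint $\int\psi^c\,d\nu\le r$ survives the weak* limit because $\psi^c$ is continuous, and then invoke \cref{low_pressure_outside}. Your explicit quantitative gap $\delta_0>0$ is a welcome refinement of a point the paper leaves implicit when it upgrades the ergodic statement of \cref{low_pressure_outside} to the supremum over all invariant measures in \eqref{eq:upper-bound-entropy}, which is exactly the uniformity needed to conclude after letting $\epsilon\to 0$.
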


\begin{proof}
    We start by taking \(\mathcal{P} = \emptyset\). To describe \(P(\mathcal{S},\varphi)\), recall we have proven in \cref{low_pressure_outside} that
    \[
    \sup_{\mu \in \mathcal{M}_f} \Bigl\{ h_{\mu}(f) +\int \varphi \,d\mu:   \lambda^c(\mu) \leq \frac{c_0}{2} \Bigr\}
    < P_{\mathrm{top}}(\varphi).
    \]
    So for \(0<r\leq \frac{c_0}{2}\),
    \begin{equation}\label{eq:upper-bound-entropy}
    \sup_{\mu \in \mathcal{M}(f)} \Bigl\{ h_{\mu}(f) +\int \varphi \,d\mu:   \lambda^c(\mu) \leq r \Bigr\}
    < P_{\mathrm{top}}(\varphi).
    \end{equation}
    
    Since we define \(\psi^c(x) = \log \| Df|_{E^c(x)} \|\), for an \(f\)-invariant measure \(\mu\), its center Lyapunov exponent satisfies 
    \(\lambda^c(\mu) = \int \psi^c \, d\mu\). 
    
    The left-hand side of \eqref{eq:upper-bound-entropy} can be related to \(P(\mathcal{S},\varphi)\): Let \(E_n \subset \mathcal{S}_n\) be any \((n,\epsilon)\)-separated set. Consider the Borel probability measure 
    
    \[
    \nu_n \;=\;\frac{1}{\sum_{x\in E_n}e^{S_n\varphi(x)}}\, \sum_{x\in E_n}\delta_x e^{S_n\varphi(x)}, \quad
    \mu_n \;=\; \frac{1}{n}\sum_{k=0}^{n-1} f_*^k \nu_n.
    \]
    
    Then half of the proof of the variational principle (see \cite[Theorem 8.6]{MR0648108}) shows that if \(\mu\) is a limit point of \(\mu_n\), then \(\mu\) is \(f\)-invariant and satisfies
    \[
    P_{\mu}(\varphi) \;=\; h_{\mu}(f) +\int \varphi\, d\mu\;\geq\; P(\mathcal{S}, \varphi, \epsilon).
    \]
    Moreover, by weak\(^*\)-convergence and the definition of \(\mathcal{S}\), we have
    \[
    \lambda^c(\mu) \;=\; \int \psi^c \, d\mu \;\leq\; r.
    \]
    Combining this with \eqref{eq:upper-bound-entropy}, we conclude that
    \[
    P(\mathcal{S}, \varphi) \;<\; P_{\mathrm{top}}(\varphi).
    \]
    Since \(\mathcal{P} = \emptyset\), this implies \(P(\mathcal{P} \cup \mathcal{S}, \varphi) < P_{\mathrm{top}}(\varphi)\). By definition, \(P_{\mathrm{spec}}^{\perp}(\varphi)<P_{\mathrm{top}}(\varphi)\), completing the proof.
\end{proof}

\subsection{Bowen property for potential}

\begin{theorem}\label{bowen-property}
Any H\"older continuous potential \(\varphi\) satisfies the Bowen property on \(\mathcal{G}\) at any scale \(\epsilon\).
\end{theorem}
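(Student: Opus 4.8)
The plan is to exploit the backward contraction along good orbit segments established in \cref{back_contract}, together with the Hölder continuity of \(\varphi\), to bound the Birkhoff sum discrepancy by a convergent geometric series. Fix a scale \(\epsilon>0\) (in fact any scale works, since the estimate only improves as \(\epsilon\) decreases). Let \(\varphi\) be Hölder with exponent \(\beta\in(0,1]\) and constant \(C>0\), so that \(|\varphi(u)-\varphi(v)|\le C\, d(u,v)^{\beta}\) for all \(u,v\).

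First I would take any \((x,n)\in\mathcal{G}\) and any \(y\in B_n(x,\epsilon)\). Applying \cref{back_contract}, for each \(0\le k\le n\) we have \(d(f^kx,f^ky)\le e^{-r(n-k)}\epsilon\); reindexing with \(j=n-k\), this reads \(d(f^{n-j}x,f^{n-j}y)\le e^{-rj}\epsilon\) for \(0\le j\le n\). Next I would estimate the Birkhoff sum difference directly:
\begin{align*}
\bigl|S_n\varphi(x)-S_n\varphi(y)\bigr|
&\le \sum_{k=0}^{n-1}\bigl|\varphi(f^kx)-\varphi(f^ky)\bigr|
\le C\sum_{k=0}^{n-1} d\bigl(f^kx,f^ky\bigr)^{\beta}\\
&\le C\sum_{j=1}^{n} \bigl(e^{-rj}\epsilon\bigr)^{\beta}
\le C\epsilon^{\beta}\sum_{j=1}^{\infty} e^{-r\beta j}
= \frac{C\epsilon^{\beta}\,e^{-r\beta}}{1-e^{-r\beta}}.
\end{align*}
The right-hand side is a constant \(K=K(\varphi,\mathcal{G},\epsilon)\) independent of \((x,n)\) and \(y\), which is exactly the Bowen property on \(\mathcal{G}\) at scale \(\epsilon\).

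The only genuine point requiring care — the ``main obstacle'', though it is mild — is ensuring that \cref{back_contract} applies uniformly: its proof invokes the bound \(\|Df^{-(n-k)}(x)\|\le e^{-r(n-k)}\) along good orbits, which rests on the defining inequality \(S_k\psi^c(f^{n-k}x)\ge kr\) for all \(0<k\le n\) together with the local invertibility of \(f\) along \(\mathcal{G}\)-segments (the center-unstable disk is backward contracted and \(f\) is a local diffeomorphism there, away from the critical set). Once that uniform contraction is in hand, the summation above is entirely routine and holds at every scale \(\epsilon>0\), since \(r>0\) is fixed and the geometric ratio \(e^{-r\beta}<1\). This completes the proof. \qed
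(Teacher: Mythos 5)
Your proposal is correct and follows essentially the same route as the paper: apply \cref{back_contract} to get the uniform backward contraction \(d(f^kx,f^ky)\le e^{-r(n-k)}\epsilon\) along \(\mathcal{G}\)-segments, then sum the Hölder estimates into a convergent geometric series independent of \(n\). No gaps.
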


\begin{proof}
Because \(\varphi\) is H\"older, there exist constants \(K>0\) and \(\alpha \in (0,1)\) such that
\[
\bigl|\varphi(x) - \varphi(y)\bigr|
\;\le\;
K\,\bigl(d(x,y)\bigr)^\alpha
\quad
\text{for all } x,y \in \Lambda.
\]
Take \((x,n)\in \mathcal{G}\) and \(y \in B_n(x,\epsilon)\). By \cref{back_contract},
\[
\bigl|S_n \varphi(x) \;-\; S_n \varphi(y)\bigr|
\;\le\;
K \sum_{k=0}^{n-1} 
\Bigl(d\bigl(f^k x,\, f^k y\bigr)\Bigr)^\alpha
\;\le\;
K \sum_{k=0}^{n-1} 
\Bigl(e^{-r\,(n-k)}\,\epsilon\Bigr)^\alpha.
\]
Re-indexing with \(j = n-k\) and noting the resulting geometric series,
\[
\sum_{k=0}^{n-1} 
\Bigl(e^{-r(n-k)}\,\epsilon\Bigr)^\alpha
\;\le\;
K\,\epsilon^\alpha 
\sum_{j=0}^{\infty} e^{-r\,\alpha\,j}
\;<\;
\infty.
\]
Hence, \(\bigl|S_n\varphi(x) - S_n\varphi(y)\bigr|\) is bounded uniformly in \(x,n,\) and \(y\), establishing the Bowen property at scale \(\epsilon\).
\end{proof}

\section{Proof of main theorems}\label{sec:proof-main}
We now complete the proof that if \(f\) is a \(C^3\)-perturbation of \(f_{\alpha}\) and \(\varphi:\Lambda \rightarrow \R\) which satisfy the hypotheses of \cref{main}, then the conditions of \cref{ct} are satisfied and hence there is a unique equilibrium state \(\mu\) for \((\Lambda, f, \varphi)\). Moreover, \(\mu\) satisfies the upper level-2 large deviation principle.

By \cref{expansivity-gap}, \(P_{\mathrm{exp}^+}^{\perp}(\varphi) \;<\; P_{\mathrm{top}}(\varphi)\). We define the decomposition \((\mathcal{P}, \mathcal{G}, \mathcal{S})\) as in \cref{orb-decomp}. In \cref{specification}, we showed that \(\mathcal{G}\) has specification at all scales. In \cref{bowen-property}, we showed that \(\varphi\) has the Bowen property on \(\mathcal{G}\). By \cref{specification-gap}, \(P\bigl(\mathcal{P} \cup \mathcal{S}, \varphi\bigr) \;<\; P_{\mathrm{top}}(\varphi)\). Thus, we see that under the hypotheses of \cref{main}, all the hypotheses of \cref{ct} are satisfied for the
decomposition \((\mathcal{P}, \mathcal{G}, \mathcal{S})\).

\printbibliography

\end{document}